\DeclareMathAlphabet{\mathchanc}{OT1}{pzc}%
                                 {m}{it}
\newcommand{\mcH}{\mathchanc{H}}
\newcommand{\mcm}{\mathchanc{m}}
\newcommand{\mco}{\mathchanc{o}}
\newcommand{\bC}{\mathbb{C}}
\newcommand{\bF}{\mathbb{F}}
\newcommand{\bP}{\mathbb{P}}
\newcommand{\bQ}{\mathbb{Q}}
\newcommand{\bZ}{\mathbb{Z}}
\newcommand{\scr}{\mathcal}
\newcommand{\sA}{\scr{A}}
\newcommand{\sB}{\scr{B}}
\newcommand{\sD}{\scr{D}}
\newcommand{\sE}{\scr{E}}
\newcommand{\sF}{\scr{F}}
\newcommand{\sG}{\scr{G}}
\newcommand{\sH}{\scr{H}}
\newcommand{\sI}{\scr{I}}
\newcommand{\sK}{\scr{K}}
\newcommand{\sL}{\scr{L}}
\newcommand{\sM}{\scr{M}}
\newcommand{\sN}{\scr{N}}
\newcommand{\sO}{\scr{O}}
\newcommand{\sQ}{\scr{Q}}
\newcommand{\os}{\overline{s}}
\newcommand{\oJ}{\overline{J}}
\DeclareMathOperator{\Tors}{{Tors}}
\DeclareMathOperator{\vol}{{vol}}
\DeclareMathOperator{\Tr}{Tr}
\DeclareMathOperator{\codim}{codim}
\DeclareMathOperator{\Exc}{Exc}
\newcommand{\sHom}[0]{{\mcH\mco\mcm}}
\DeclareMathOperator{\Id}{{Id}}
\DeclareMathOperator{\im}{{im}}
\DeclareMathOperator{\Pic}{Pic}
\DeclareMathOperator{\reg}{reg}
\DeclareMathOperator{\rk}{{rk}}
\DeclareMathOperator{\Spec}{{Spec}}
\DeclareMathOperator{\coeff}{{coeff}}
\newcommand{\factor}[2]{\left. \raise 2pt\hbox{\ensuremath{#1}} \right/
        \hskip -2pt\raise -2pt\hbox{\ensuremath{#2}}}
\renewcommand\subsection{
  \renewcommand{\sfdefault}{pag}
  \@startsection{subsection}%
  {2}{0pt}{.8\baselineskip}{.4\baselineskip}{\raggedright
    \sffamily\itshape\small\bfseries
  }}
\renewcommand\section{
  \renewcommand{\sfdefault}{phv}
  \@startsection{section} %
  {1}{0pt}{\baselineskip}{.8\baselineskip}{\centering
    \sffamily
    \scshape
    \bfseries
}}
\title{Frobenius techniques in birational geometry}
\author{Zsolt Patakfalvi}
\begin{document}

\maketitle

\tableofcontents

\begin{abstract}
This is a survey for the 2015 AMS Summer Institute on Algebraic Geometry about the Frobenius type techniques recently used extensively in positive characteristic algebraic geometry.  We first explain the basic ideas through simple versions of the fundamental definitions and statements, and then we survey  most of the recent algebraic geometry results obtained using these techniques.
\end{abstract}

\section{Introduction}

Let $A$ be a ring over a field $k$ of characteristic $p>0$. 
The \emph{absolute Frobenius homomorphism}  $F_A$ of $A$ is the homomorphism $A \to A$ defined by $F(x):=x^p$. It is easy to see that this is indeed  a ring-homomorphism  using that in the binomial expansion of $(x+y)^p$ all the mixed terms are divisible by $p$.  Furthermore, 
\begin{enumerate}
 \item $F_A$ is a functorial homomorphism, that is, it commutes with any  homomorphisms of rings over $k$, and
\item $F_A$ induces identity on $\Spec A$, since for any prime ideal $q \subseteq A$, $x^p \in q \Rightarrow x \in q$ by the prime property. 
\end{enumerate}
In particular, for any scheme $X$ over $k$, one obtain the \emph{absolute Frobenius homomorphism} $F_X : X \to X$, which is identity on the underlying topological space $|X|$, and for every open set $U \subseteq X$, $F_X^{\#}(U)$ is the absolute Frobenius homomorphism of the ring $\sO_X(U)$. 

This survey is about techniques that become very fertile in the past 5 years in algebraic geometry over $k$, using structures arising from absolute Frobenius morphisms. The basic idea behind these techniques is the following: given a scheme $X$ over $k$, the absolute Frobenius morphism of $X$ comes with a structure homomorphism $\sO_X \to F_* \sO_X$. In particular, this homomorphism endows $F_* \sO_X$ with a $\sO_X$-module structure, which is coherent in the most algebro-geometric situations, e.g., if $X$ is quasi-projective over $k$ and $k$ is perfect.  Then the investigation of the following deliberately vague question led eventually to techniques discussed here:

\begin{question}
\label{qtn:module}
What is the $\sO_X$-module structure of $F_* \sO_X$?
\end{question}

There are two  ways one can branch from \autoref{qtn:module}. One can ask for the global structure, when $X$ is projective, or one can ask for a local module structure, when $X$ is affine. Historically the latter appeared first, in commutative algebra. One of the first milestones in this study was the proof in 1969 by Kunz of the statement that locally $F_* \sO_X$ is a free $\sO_X$-module if and only if $X$ is regular \cite{Kunz_Characterizations_of_regular_local_rings_for_characteristic_p}. This foreshadowed deep connections to singularity theory that materialized a few decades later. Then further, investigations of the module structure followed, eventually growing into a major area of  commutative algebra (e.g., \cite{Hochster_Roberts_The_purity_of_the_Frobenius_and_local_cohomology}, \cite{Hochster_Huneke_Tight_closure_invariant_theory_and_the_Brianc_con-Skoda_theorem}, etc). The investigation of the global module structure originates from 
 representation theory (e.g.,
\cite{Mehta_Ramanathan_Frobenius_splitting_and_cohomology_vanishing_for_Schubert_varieties,
Ramanan_Ramanathan_Projective_normality_of_flag_varieties_and_Schubert_varieties}). In the early 90's the connection between the two perspectives was realized and then they were connected to algebraic geometry 
(e.g., \cite{Mehta_Srinivas_A_characterization_of_rational_singularities,Hara_A_characterization_of_rational_singularities,Smith_The_multiplier_ideal_is_a_universal_test_ideal}). The typical statements of this period were connecting  module theoretic notions to singularity theoretic notions of algebraic geometry. Purely geometric applications, that is, statements that had nothing to do with module theory, and for which the module theoretic input was used only in the proofs, came about only much more recently. Primarily by the presentation and generalization of the commutative algebra arguments to a more geometric language by Karl Schwede (e.g., \cite{Schwede_F_adjunction,Schwede_A_canonical_linear_system}), the first part of the present decade brought many results in positive characteristic geometry, including topics such as Minimal Model Theory 
\cite{Hacon_Xu_On_the_three_dimensional_minimal_model_program_in_positive_characteristic,Birkar_Existence_of_flips_and_minimal_models_for_3_folds_in_char_p,
Xu_On_base_point_free_theorem_of_threefolds_in_positive_characteristic,Birkar_Waldron_Existence_of_Mori_fibre_spaces_for_3_folds_in_char_p,Waldron_Finite_generation_of_the_log_canonical_ring_for_3-folds_in_char_p,Waldron_The_LMMP_for_log_canonical_3-folds_in_char_p,Cascini_Tanaka_Xu_On_base_point_freeness_in_positive_characteristic}, semi-positivity and subaddivitiy of Kodaira dimension 
\cite{Patakfalvi_Semi_positivity_in_positive_characteristics,Patakfalvi_On_subadditivity_of_Kodaira_dimension_in_positive_characteristic_over_a_general_type_base,Chen_Zhang_The_subadditivity_of_the_Kodaira-dimension_for_fibrations_of_relative_dimension_one_in_positive_characteristics,Birkar_Chen_Zhang_Iitaka_s_C_n_m_conjecture_for_3-folds_over_finite_fields,Ejiri_Weak_positivity_theorem_and_Frobenius_stable_canonical_rings_of_geometric_generic_fibers,Zhang_Subadditivity_of_Kodaira_dimensions_for_fibrations_of_three-folds_in___positive_characteristics,Ejiri_Iitaka_s_C_n_m_conjecture_for_3-folds_in_positive_characteristic}, Seshadri constants 
\cite{Mustata_Schwede_A_Frobenius_variant_of_Seshadri_constants}, numerical dimension 
\cite{Mustata_The_non_nef_locus_in_positive_characteristic,Cascini_McKernan_Mustata_The_augmented_base_locus_in_positive_characteristic,Cascini_Hacon_Mustata_Schwede_On_the_numerical_dimension_of_pseudo_effective_divisors_in_positive_characteristic}, rationally connectedness and the geometry of Fano varieties \cite{Gongyo_Li_Patakfalvi_Schwede_Tanaka_Zong_On_rational_connectedness_of_globally_F_regular_threefolds,Gongyo_Nakamura_Tanaka_Rational_points_on_log_Fano_threefolds_over_a_finite_field,Wang_On_relative_rational_chain_connectedness_of_threefolds_with_anti-big___canonical_divisors_in_positive_characteristics,Ejiri_Positivity_of_anti-canonical_divisors_and_F-purity_of_fibers} generic vanishing and other topics about abelian varieties (e.g. singularities of the Theta divisor) and varieties of maximal Albanese dimension
\cite{Hacon_Singularities_of_pluri_theta_divisors_in_Char_p,Zhang_Pluri-canonical_maps_of_varieties_of_maximal_Albanese_dimension_in___positive_characteristic,Hacon_Patakfalvi_A_generic_vanishing_in_positive_characteristic,Watson_Zhang_On_the_generic_vanishing_theorem_of_Cartier_modules,Hacon_Patakfalvi_On_charactarization_of_abelian_varieties_in_characteristic_p,Sannai_Tanaka_A_characterization_of_ordinary_abelian_varieties_by_the_Frobenius___push-forward_of_the_structure_sheaf,Wang_Generic_vanishing_and_classification_of_irregular_surfaces_in_positive___characteristics},  Kodaira type vanishings, surjectivity statements and liftability to characteristic $0$
\cite{Tanaka_The_X-method_for_klt_surfaces_in_positive_characteristic,Tanaka_The_trace_map_of_Frobenius_and_extending_sections_for_threefolds,Cascini_Tanaka_Witaszek_On_log_del_Pezzo_surfaces_in_large_characteristic,Zdanowicz_Liftability_of_singularities_and_their_Frobenius_morphism_modulo_p_2} canonical bundle formula \cite{Das_Schwede_The_F-different_and_a_canonical_bundle_formula,Das_Hacon_On_the_Adjunction_Formula_for_3-folds_in_characteristic_p>5}, inversion of adjunction \cite{Das_On_strongly_F_regular_inversion_of_adjunction}. We refer to  \autoref{sec:application} for more detailed list and explanation on these results. 

\subsection{Structure} In \autoref{sec:basic_notions} we present the fundamental definitions of the area as well as some of the classical statements, where classical means that they were (mostly) proven before 2000. In \autoref{sec:newer_methods} we present more recent statements that are geared more towards birational geometry applications. In particular, all of them are centered around finding sections of line bundles in the presence of some positivity. Lastly, in \autoref{sec:application} we survey the recent applications to higher dimensional algebraic geometry of the methods presented in the previous sections. 

\subsection{Acknowledgement}

The author of the article was supported by the NSF grant DMS-1502236.

\section{Setup}

For simplicity, throughout the article,  we are working on either quasi-projective varieties $X$ over a perfect field $k$ of characteristic $p>0$, or on $\Spec \sO_{X,x}$, where $X$ is as above, and $x \in X$ is arbitrary. 

\section{Basic notions - fundamental results}
\label{sec:basic_notions}

\subsection{$F$-purity and global $F$-purity}
\label{sec:F_pure}

One of the simplest special case of \autoref{qtn:module} is the following:

\begin{question}
\label{qtn:splitting}
Does the map $\sO_X \to F_* \sO_X$ split as a homomorphism of $\sO_X$-modules?
\end{question}

In fact, if the above splitting occurs locally, we call $X$ \emph{$F$-pure}, and if it happens globally we call it \emph{globally $F$-pure}. In this section we explain some of the relevance of these two notions to algebraic geometry. We start with (local) $F$-purity.

\begin{remark}
The splitting asked for in \autoref{qtn:splitting}, is equivalent to the natural evaluation map $\sHom_X(F_* \sO_X, \sO_X) \to \sO_X$ given by $\psi \mapsto \psi(1)$ being surjective. Then one can deduce that $F$-purity is a local notion. That is, $X$ is $F$-pure if and only if $\Spec \sO_{X,x}$ if $F$-pure for all $x \in X$ \cite[Exc 2.9]{Schwede_Tucker_A_survey_of_test_ideals}.  In particular, $X$ is $F$-pure if and only if the splitting asked in \autoref{qtn:splitting} happens at every local ring, or equivalently on any affine cover. 

\end{remark}

  If local equations are known then verifying $F$-purity is algorithmic. To state the precise statement, let us recall that for an ideal $I \subseteq R$ in a ring over $k$, $I^{[p]}:=\left(f_1^p, \dots, f_r^p \right)$, where $f_1, \dots, f_r$ is an arbitrary set of generators of $I$ (one can show that this is independent of the choice of generators, e.g., \cite[Exercise 2.12]{Schwede_Tucker_A_survey_of_test_ideals}).

\begin{theorem}
\cite[Lemma 1.6]{Fedder_F_purity_and_rational_singularity}
Let $X:=\Spec S$, where $S:= k[x_1,\dots,x_n]/I$, and let $\oJ$ be a prime ideal of $S$, and $J$ the preimage of $\oJ$ in $k[x_1,\dots,x_n]$. Then $S_{\oJ}$ is $F$-pure if and only if $\left(I^{[p]}:I \right) \not\subseteq J^{[p]}$.
\end{theorem}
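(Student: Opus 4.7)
The plan is to parametrize all $R$-linear maps $F_*R\to R$ (with $R:=k[x_1,\ldots,x_n]$) using the fact that $R$ is regular, cut out those which descend to $S$-linear maps $F_*S\to S$, and then read off the splitting condition at $\oJ$. Since $F$-purity is local and $F_*S$ is finitely presented over $S$, this reduces to finding some \emph{global} $\phi\in\Hom_S(F_*S,S)$ with $\phi(1)\notin\oJ$.

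First I would use Kunz's theorem: $F_*R$ is $R$-free on the basis $\{F_*x^\alpha:\alpha\in\{0,\ldots,p-1\}^n\}$, and consequently $\Hom_R(F_*R,R)$ is free of rank one as a right $F_*R$-module with a canonical generator $\Phi$—the Frobenius trace—normalized by $\Phi(F_*x^{(p-1,\ldots,p-1)})=1$ and $\Phi(F_*x^\alpha)=0$ for the other basis elements. Under this identification, $F_*r\in F_*R$ corresponds to the map $F_*s\mapsto\Phi(F_*(rs))$, and the twisted linearity $\Phi(F_*(s^pt))=s\,\Phi(F_*t)$ translates, via the unique decomposition $r=\sum_\alpha a_\alpha^p x^\alpha$ with $\alpha\in\{0,\ldots,p-1\}^n$, into the rule ``$\Phi(F_*r)$ is the top coefficient $a_{(p-1,\ldots,p-1)}$.''

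Next I would compute $\Hom_S(F_*S,S)$ from $F_*S=F_*R/F_*I$ and flatness of $F_*R$:
$$\Hom_S(F_*S,S)\;\cong\;\bigl\{\phi\in\Hom_R(F_*R,R):\phi(F_*I)\subseteq I\bigr\}\,\big/\,I\cdot\Hom_R(F_*R,R).$$
Through the identification of the previous step, $I\cdot\Hom_R(F_*R,R)$ becomes $F_*(I^{[p]})$ (using $i\cdot F_*r=F_*(i^pr)$), and the selection condition on $F_*r$ becomes $r\in(I^{[p]}:I)$. The nontrivial direction of this second claim relies on a coefficient-extraction trick: for each $a\in I$ and each $\beta\in\{0,\ldots,p-1\}^n$ one has $x^\beta a\in I$, and the top coefficient of $r(x^\beta a)=x^\beta(ra)$ is precisely the coefficient $a_{(p-1)-\beta}$ of $ra$, so letting $\beta$ vary recovers every coefficient and forces $ra\in I^{[p]}$. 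Thus $\Hom_S(F_*S,S)\cong F_*(I^{[p]}:I)/F_*(I^{[p]})$, and the evaluation $\phi\mapsto\phi(1)$ becomes $F_*r\mapsto\Phi(F_*r)$.

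It then remains to match the condition ``some $r\in(I^{[p]}:I)$ has $\Phi(F_*r)\notin J$'' with ``$(I^{[p]}:I)\not\subseteq J^{[p]}$''. One direction is immediate: if $r\in J^{[p]}$ then—invoking Kunz flatness once more, which identifies $J^{[p]}$-membership with ``every Frobenius coefficient $a_\alpha$ lies in $J$''—in particular the top coefficient $\Phi(F_*r)$ lies in $J$. For the converse, pick any $s\in(I^{[p]}:I)\setminus J^{[p]}$ together with a coefficient $a_\alpha$ of $s$ not lying in $J$; the element $x^{(p-1)-\alpha}s$ still lies in the ideal $(I^{[p]}:I)$, and its top coefficient is precisely the offending $a_\alpha$, producing the desired splitting at $\oJ$. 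The main obstacle will be organizational rather than conceptual—tracking the two different $R$-module structures on $F_*R$ (standard versus via Frobenius), and correctly identifying the Frobenius trace with the ``top coefficient'' map on the chosen basis—while the sole substantive input throughout is Kunz's flatness of Frobenius on $R$.
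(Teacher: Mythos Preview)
The paper does not actually prove this statement; it merely states Fedder's criterion and cites the original source \cite[Lemma 1.6]{Fedder_F_purity_and_rational_singularity}, so there is no proof in the paper to compare against.

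That said, your argument is correct and is essentially the standard proof of Fedder's criterion. The key steps---identifying $\Hom_R(F_*R,R)$ with $F_*R$ via the trace generator $\Phi$, showing that the maps descending to $S$ correspond exactly to $F_*(I^{[p]}:I)$ while those vanishing on $S$ correspond to $F_*I^{[p]}$, and then using the monomial shift $r\mapsto x^{(p-1)-\alpha}r$ to move any Frobenius coefficient into the top slot---are all handled correctly. Your coefficient-extraction computation is right: when $\alpha,\beta\in\{0,\dots,p-1\}^n$, the only way $x^{\alpha+\beta}$ contributes to the $x^{(p-1,\dots,p-1)}$ term of the Frobenius expansion with carry $\gamma=0$ is $\alpha=(p-1)-\beta$, so $\Phi(F_*(x^\beta\cdot ra))=a_{(p-1)-\beta}$ as you claim. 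The reduction at the start (replacing ``$S_{\oJ}$ is $F$-pure'' by ``some global $\phi$ has $\phi(1)\notin\oJ$'') is also fine, since $F_*S$ is finitely presented over the Noetherian ring $S$ and hence $\Hom$ commutes with the localization at $\oJ$. The only implicit hypothesis you are using beyond what is stated is the perfectness of $k$, needed for the Frobenius expansion $r=\sum a_\alpha^p x^\alpha$ to exist; this is part of the paper's standing assumptions.
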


\begin{corollary}
\label{cor:Fedder}
A hypersurface singularity $\Spec \left( k[x_1,\dots,x_n]/(f) \right)$ is $F$-pure at the origin if and only if $f^{p-1} \not\in \left(x_1^p,\dots,x_n^p \right)$.
\end{corollary}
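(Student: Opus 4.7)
The plan is to derive this as a direct specialization of Fedder's criterion stated just above, applied to the ideal $I = (f)$ inside $k[x_1,\dots,x_n]$ and the prime ideal $J = (x_1,\dots,x_n)$ corresponding to the origin. Under these choices, $J^{[p]} = (x_1^p,\dots,x_n^p)$ by definition, so the theorem reduces the question of $F$-purity at the origin to deciding whether $\bigl((f)^{[p]} : (f)\bigr) \subseteq (x_1^p,\dots,x_n^p)$.

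The main computation is therefore to identify the colon ideal $\bigl((f^p) : (f)\bigr)$ in $k[x_1,\dots,x_n]$. I claim it equals $(f^{p-1})$. The inclusion $(f^{p-1}) \subseteq ((f^p):(f))$ is immediate since $f^{p-1}\cdot f = f^p \in (f^p)$. For the reverse inclusion, suppose $g \in k[x_1,\dots,x_n]$ satisfies $gf \in (f^p)$, so that $gf = h f^p$ for some polynomial $h$. Since $k[x_1,\dots,x_n]$ is a domain and $f \neq 0$, we may cancel $f$ to obtain $g = h f^{p-1} \in (f^{p-1})$.

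Combining these two steps, the condition $(I^{[p]}:I) \not\subseteq J^{[p]}$ from Fedder's theorem becomes
\[
(f^{p-1}) \not\subseteq (x_1^p,\dots,x_n^p),
\]
which, since $(f^{p-1})$ is a principal ideal, is equivalent to the single membership statement $f^{p-1} \notin (x_1^p,\dots,x_n^p)$. This is precisely the asserted criterion.

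There is no real obstacle here, as the entire argument is a bookkeeping reduction to Fedder's theorem; the only substantive point is the elementary colon-ideal computation $((f^p):(f)) = (f^{p-1})$, which uses only that we work in a polynomial ring, i.e., an integral domain.
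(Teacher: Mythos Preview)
Your proof is correct and matches the paper's approach: the paper states this corollary without proof, treating it as an immediate specialization of Fedder's criterion, and you have supplied exactly the routine colon-ideal computation $\bigl((f^p):(f)\bigr)=(f^{p-1})$ that makes this specialization explicit.
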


\begin{example}
\label{ex:F_pure}
The cone over the Fermat cubic $\{x^3+y^3 + z^3=0\}$ is $F$-pure if and only if $p \equiv 1 \mod 3$. This in turn is equivalent to the corresponding elliptic curve being ordinary.

One can show easily the above statement using \autoref{cor:Fedder}. For example, if $p \equiv 1 \mod 3$, then $\left(x^3+y^3 + z^3\right)^{p-1}$ contains the non-zero monomial $\frac{(p-1)!}{\frac{p-1}{3}!\frac{p-1}{3}!\frac{p-1}{3}!}x^{p-1}y^{p-1}z^{p-1}$, which is not in $\left( x^p,y^p,z^p \right)$, and all the other monomials of $\left(x^3+y^3 + z^3\right)^{p-1}$ are in $\left(x^p,y^p,z^p \right)$. In particular, $\left(x^3+y^3+z^3\right)^{p-1} \not\in \left(x^p,y^p,z^p\right)$. 
\end{example}

As cones over elliptic curves are the typical examples of log canonical surface singularities, \autoref{ex:F_pure} might indicate already that $F$-purity is closely connected to the usual notions of the classification theory of algebraic varieties. In fact,  $F$-pure singularities are connected to log canonical singularities, and projective, globally $F$-pure varieties are connected to Calabi-Yau type varieties. 

Before stating any precise statement about this connection, let us recall first these notions. Let $(X, \Delta)$ be a pair, that is, $X$ is normal, and $\Delta$ is an effective $\bQ$-divisor, i.e., a formal sum $\sum a_i D_i$, where $a_i \in \bQ$ and $D_i$ are irreducible Weil divisors. Then $(X,\Delta)$ has log canonical singularities if $K_X + \Delta$ is $\bQ$-Cartier, and for each normal variety $Y$, and birational proper map $f : Y \to X$,  all the coefficient of $\Gamma$ are at most $1$, where $\Gamma$ is the unique $\bQ$-divisor for which 
\begin{equation}
\label{eq:lc_def}
K_Y + \Gamma = f^* (K_X + \Delta)
\end{equation}
holds. Note that in \autoref{eq:lc_def} we require actual equation, as opposed to linear equivalence, and we assume that $K_X = f_* K_Y$. If $X$ is just a normal variety, without any further assumption, we say that $X$ has \emph{log canonical singularities} if and only if there is an effective $\bQ$-divisor $\Delta$ on $X$ such that $(X, \Delta)$ is log canonical. Similarly a projective variety $X$ is of \emph{Calabi-Yau type} if there is an effective $\bQ$-divisor $\Delta$ on $X$, such that $(X, \Delta)$ is log canonical and $K_X + \Delta \sim_{\bQ} 0$ ($\bQ$-linear equivalence means that some multiples are linearly equivalent).

One direction of the above mentioned connection is  in fact not hard to prove, and we will show it in \autoref{sec:F_singularities}:

\begin{theorem} \cite[Thm 3.3]{Hara_Watanabe_F_regular_and_F_pure_rings_vs_log_terminal_and_log_canonical} \cite[Thm 4.3 \& Thm 4.4]{Schwede_Smith_Globally_F_regular_an_log_Fano_varieties}
\label{thm:F_pure_lc_CY}
Let $X$ be normal. 
\begin{enumerate}
 \item If $X$ is $F$-pure, then it is log canonical, that is, there is an effective $\bQ$-divisor $\Delta$, such that $(X, \Delta)$ is log canonical.
\item If $X$ is projective and globally $F$-pure, then it is of Calabi-Yau type, that is, there is an effective $\bQ$-divisor $\Delta$, such that $(X, \Delta)$ is log canonical and $K_X + \Delta \sim_{\bQ} 0$.
\end{enumerate}
\end{theorem}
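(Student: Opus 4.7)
The plan is to convert the splitting $\sO_X \to F_*\sO_X$ into an effective boundary divisor via Grothendieck duality for the (finite) Frobenius morphism, and then translate the splitting property into log canonicity of the resulting pair.

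\medskip

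\textbf{Step 1 (Duality identification).} Since $F : X \to X$ is finite and $X$ is normal over a perfect field, Grothendieck duality yields a canonical isomorphism
\[
\sHom_{\sO_X}(F_*\sO_X, \sO_X) \;\cong\; F_*\sO_X\bigl((1-p)K_X\bigr),
\]
stemming from $\omega_F \cong \sO_X((1-p)K_X)$ on the regular locus and extended by reflexivity to all of $X$. Under this identification, any $\sO_X$-linear map $\phi : F_*\sO_X \to \sO_X$ corresponds to a (local or global) section $s$ of $\sO_X((1-p)K_X)$, and the property that $\phi$ splits the structure map $\sO_X \to F_*\sO_X$ (i.e.\ $\phi(1) = 1$) translates into $s$ being nondegenerate at the point in question.

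\medskip

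\textbf{Step 2 (Boundary divisor).} In the projective, globally $F$-pure case of statement (2) we obtain a global section $s \in H^0(X, \sO_X((1-p)K_X))$. In the local $F$-pure case of statement (1), local $F$-purity at each point yields local sections, which can be assembled into an effective $\bQ$-divisor $\Delta$ on all of $X$ using quasi-projectivity (twist $\sO_X((1-p)K_X)$ by an ample divisor to produce global effective sections, and exploit that log canonicity is a local condition). Setting
\[
\Delta \;:=\; \tfrac{1}{p-1}\,D, \qquad D := \opdiv(s) \geq 0,
\]
we have $(p-1)(K_X + \Delta) \sim 0$ as Weil divisors, hence $K_X + \Delta$ is $\bQ$-Cartier and $K_X + \Delta \sim_{\bQ} 0$.

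\medskip

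\textbf{Step 3 (Log canonicity).} To verify $(X, \Delta)$ is log canonical, iterate the splitting. The composition
\[
\phi^e \;:=\; \phi \circ F_*\phi \circ \cdots \circ F^{e-1}_*\phi \;:\; F^e_*\sO_X \to \sO_X
\]
is again a splitting, of the $e$-th iterated Frobenius, corresponding via the analogue of Step 1 to a section $s_e$ of $\sO_X((1-p^e)K_X)$; one may arrange $s_e$ so that the effective divisor $D_e := \opdiv(s_e) \sim (1-p^e)K_X$ satisfies $D_e = (1 + p + \cdots + p^{e-1})\,D$. Fix a log resolution $f : Y \to X$ of $(X, \Delta)$ and write $K_Y + \Gamma = f^*(K_X + \Delta)$. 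For any prime divisor $E \subset Y$, compatibility of $\phi^e$ with $f$ via the relative trace $\Tr_{F^e} : F^e_* \omega_Y \to \omega_Y$ implies that if $\coeff_E \Gamma > 1$, then $\phi^e$ would be forced to vanish at the generic point of $f(E)$ for all $e \gg 0$, contradicting $\phi^e(1) = 1$. Hence every coefficient of $\Gamma$ is $\leq 1$, so $(X, \Delta)$ is log canonical. Combined with $K_X + \Delta \sim_{\bQ} 0$, this gives Calabi-Yau type in case (2).

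\medskip

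The main obstacle is Step 3: converting the algebraic splitting condition into the discrepancy bound on $Y$. This uses a careful compatibility between iterated Frobenius traces and the ramification formula along $f$, and is where the positive characteristic structure genuinely enters. Steps 1 and 2 are essentially formal manipulations with duality and Weil divisors on normal varieties.
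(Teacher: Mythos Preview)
Your outline matches the paper's approach closely in Steps 1 and 2: the duality identification and the construction of $\Delta$ via $\opdiv(s)/(p-1)$ are exactly the content of the paper's \autoref{lem:splitting_lemma} and \autoref{prop:F_pure_boundary}. (In the local case the paper simply reduces to the affine situation rather than twisting by an ample, but your variant is fine since log canonicity is local.)

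The difference, and the gap, is in Step 3. Your iteration of $\phi$ is unnecessary, and more importantly your contradiction mechanism is not quite right: you claim that if $\coeff_E \Gamma > 1$ then $\phi^e$ would vanish at the generic point of $f(E)$. But for an \emph{exceptional} divisor $E$, the image $f(E)$ has codimension $\geq 2$ in $X$, so vanishing there does not contradict $\phi^e(1)=1$ on the normal variety $X$. The paper avoids this by working entirely on $Y$: it shows (using that $f$ is an isomorphism over an open set whose complement has codimension $\geq 2$, plus functoriality of trace) that the section $s$ corresponding to the splitting on $X$ induces a section $t$ on $Y$ which still maps to $1$ under $\Tr_{F,\sO_Y}$. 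Thus $\sO_Y \hookrightarrow F_*\sO_Y((p-1)\Gamma)$ itself splits. Then a one-line DVR computation (\autoref{prop:smaller_than_p_to_the_e}) at the generic point of each prime divisor $E\subset Y$ gives $(p-1)\coeff_E\Gamma < p$, hence $\coeff_E\Gamma \leq 1$, with no iteration required. So the key step you flagged as the obstacle is handled by lifting the splitting to $Y$, not by pushing information back down to $X$.
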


In the case of reductions of characteristic $0$ varieties mod $p$, one also has a backwards statement, conditional on the following arithmetic conjecture:

\begin{conjecture}
\label{conj:weak_ordinarity} { ( \scshape Weak ordinarity conjecture ) } 

Let $Y$ be a smooth, connected projective variety over an algebraically closed field $k'$ of characteristic $0$. Given a model $Y_A$ of $Y$ over a finitely generated $\bZ$-algebra $A$, the set
\begin{equation*}
\{ s\in \Spec A | \textrm{$s$ is a closed point such that the action of Frobenius on $H^{\dim Y_s}\left(Y_s, \sO_{Y_s}\right)$ is bijective} \}
\end{equation*}
is dense in $\Spec A$. 
\end{conjecture}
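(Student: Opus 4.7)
This is the \emph{weak ordinarity conjecture}, a genuinely open problem; what I can offer is a reduction together with a natural line of attack and an indication of where it fails. First I would recast the condition crystalline-theoretically. By smoothness and proper base change, $F$ acts bijectively on $H^{\dim Y_s}(Y_s, \sO_{Y_s})$ precisely when the slope-$0$ part of crystalline cohomology $H^{\dim Y_s}_{\mathrm{cris}}(Y_s/W(k(s)))$ has dimension equal to $h^{d,0}(Y)$, where $d=\dim Y$, i.e., the Newton polygon of $H^d_{\mathrm{cris}}$ meets the Hodge polygon at its leftmost breakpoint. Call such closed points \emph{top-ordinary}; the conjecture asserts they are dense in $\Spec A$.

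Second, after possibly shrinking $A$ so that $Y_A \to \Spec A$ is smooth, projective, and $A$ is integral, the Grothendieck--Katz upper semicontinuity theorem for Newton polygons of $F$-crystals implies that the locus of $s$ whose Newton polygon on $H^d_{\mathrm{cris}}$ lies on or below any fixed polygon is open. The top-ordinary locus is therefore open in $\Spec A$, and since $\Spec A$ is Jacobson and irreducible, openness plus nonemptiness would already give density at closed points. So the conjecture reduces to the existence of a single top-ordinary closed fibre.

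For that existence I would attempt a Chebotarev-style argument on $\ell$-adic cohomology. Choose a prime $\ell$ and consider the monodromy representation $\rho_\ell \colon G_K \to \GL(H^d_{\text{\'et}}(Y_{\ok}, \bQ_\ell))$ with $K = \Frac A$. For a closed point $s$ of residue characteristic $p \neq \ell$, comparison with $p$-adic Hodge theory translates top-ordinarity at $s$ into an integrality condition on the characteristic polynomial of $\rho_\ell(\mathrm{Frob}_s)$ modulo $p$: one wants exactly $h^{d,0}$ of the eigenvalues to be $p$-adic units. If the Zariski closure of $\rho_\ell(G_K)$ is sufficiently large --- in particular if it contains $\GL$ of the top Hodge component, as predicted by the Mumford--Tate conjecture for generic $Y$ --- then a Chebotarev equidistribution argument in the spirit of Sato--Tate produces a positive-density set of primes supplying the required unit eigenvalues, hence a top-ordinary fibre.

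The main obstacle is precisely this last hypothesis. For an arbitrary smooth projective $Y$ in characteristic zero, neither the motivic Galois group nor the $\ell$-adic monodromy is understood in general, and for varieties whose top-degree cohomology is controlled by, say, an abelian variety with a large CM endomorphism algebra, the Frobenius eigenvalues can be forced off the appropriate unit locus at every prime, yielding no ordinary fibre at all. The conjecture is consequently known essentially only in cases where the motive is under explicit control: abelian varieties (Ogus, Noot), K3 surfaces via Kuga--Satake (Bogomolov--Zarhin), curves, and varieties admitting CM lifts. A full proof would seem to require a genuine Sato--Tate-type equidistribution theorem for $H^d$ of arbitrary smooth projective varieties, which is well beyond current techniques, and this is exactly why the statement is posed here as a conjecture.
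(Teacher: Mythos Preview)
You are right that the paper offers no proof: this is stated as an open conjecture, used only as a hypothesis in \autoref{thm:F_pure_reduction}, and your overall assessment that it is known only in cases with explicit motivic control is correct. Two steps of your sketch, however, are wrong. First, Grothendieck--Katz semicontinuity is a theorem about $F$-crystals over a base in characteristic $p$; it says nothing over the mixed-characteristic base $\Spec A$. The top-ordinary locus lies inside the set of closed points of $\Spec A$, which is certainly not open, so your reduction ``openness plus one ordinary fibre implies density'' fails. What semicontinuity buys is openness of the ordinary locus inside each stratum $\Spec(A \otimes \bF_p)$, but density in $\Spec A$ still demands ordinary fibres for infinitely many $p$; this is already the full strength of the conjecture in the basic case $A = \bZ$, where each stratum is a single point and semicontinuity is vacuous.

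Second, your assertion that large CM endomorphism algebras can force the Frobenius eigenvalues off the unit locus at \emph{every} prime, ``yielding no ordinary fibre at all'', is incorrect and in fact contradicts your own subsequent attribution of the abelian-variety case to Ogus and Noot. CM abelian varieties do have a positive density of ordinary primes, via the Shimura--Taniyama description of Frobenius in terms of the CM type. What is true is that the $\ell$-adic monodromy group is then a torus rather than a full general linear group, so a naive Sato--Tate equidistribution heuristic does not apply; but the desired conclusion survives by a direct argument. The genuine obstruction to your Chebotarev strategy is the general case, where one has no handle on the motivic Galois group, not the CM case.
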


\begin{theorem} \cite{Mustata_Srinivas_Ordinary_varieties_and,Takagi_Adjoint_ideals_and_a_correspondence_between_log_canonicity_and_F-purity,Bhatt_Schwede_Takagi_The_weak_ordinarity_conjecture_and_F-singularities}
\label{thm:F_pure_reduction}
Let $X$ be a log canonical singularity over an algebraically closed field $k'$ of characteristic $0$. Given a model $X_A$ of $X$ over a finitely generated $\bZ$-algebra $A$, 
\begin{equation*}
\{ s\in \Spec A | \textrm{$s$ is a closed point such that $X_s$ is $F$-pure} \}
\end{equation*}
is dense in $\Spec A$, if we assume \autoref{conj:weak_ordinarity}. 
\end{theorem}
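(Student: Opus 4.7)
The plan is to reduce $F$-purity at a closed point to a Frobenius-ordinarity condition on a smooth projective variety built from a log resolution of $X$, and then invoke \autoref{conj:weak_ordinarity} on the strata of the resolution's boundary.

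First, I would localize at a closed point $x \in X$ and spread out the data. Choose an effective $\bQ$-divisor $\Delta$ with $(X,\Delta)$ log canonical, then a log resolution $f : Y \to X$ in characteristic $0$ satisfying
\begin{equation*}
K_Y + \Gamma = f^*(K_X + \Delta), \qquad \Gamma = E + \Gamma',
\end{equation*}
where $E := \lfloor \Gamma \rfloor$ is the reduced simple normal crossings divisor whose components carry coefficient $1$, and $\Gamma'$ has coefficients strictly less than $1$. After shrinking $\Spec A$ if necessary, I may assume $f_A : Y_A \to X_A$, the divisors $E_A$ and $\Gamma'_A$, the simple normal crossings hypothesis on $E_A$, and all required flatness properties spread out over $\Spec A$, specializing fibrewise to the original picture.

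Second, the technical core is to translate $F$-purity of $X_s$ at $x_s$ into a statement about Frobenius acting on coherent cohomology of $E_s$. The route is via local duality: $F$-purity at $x_s$ is equivalent to injectivity of the Frobenius action on the top local cohomology
\begin{equation*}
F : H^d_{x_s}(\sO_{X_s}) \longrightarrow H^d_{x_s}(\sO_{X_s}), \qquad d = \dim X_s.
\end{equation*}
Using that $(X,\Delta)$ log canonical implies $X$ is Du~Bois (Kov\'acs--Schwede), the cone of $\sO_{X} \to Rf_* \sO_{Y}$ is quasi-isomorphic to $Rf_* \sO_{E}[-1]$ (up to a piece supported away from the log canonical centers, controlled by the strict inequalities in $\Gamma'$). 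One then shows this identification is compatible with Frobenius on a suitable dense subset of $\Spec A$, so that via the Leray spectral sequence the obstruction to $F$-purity is identified with the Frobenius action on the groups $H^i(E_s, \sO_{E_s})$.

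Third, I would invoke \autoref{conj:weak_ordinarity}. The components of $E$, together with all strata of intersections of components, are smooth projective varieties in characteristic $0$; the conjecture provides, for each such stratum, a dense subset of closed points $s \in \Spec A$ at which Frobenius acts bijectively on its top coherent cohomology. A Mayer--Vietoris / simplicial argument for the cohomology of the simple normal crossings divisor $E_s$ in terms of its strata then yields bijectivity of Frobenius on $H^*(E_s, \sO_{E_s})$ at every $s$ in the (still dense) intersection of these subsets. By the previous step, $X_s$ is $F$-pure at $x_s$ for such $s$, completing the argument.

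The main obstacle is the second step: making the Du~Bois quasi-isomorphism for $X$ compatible with Frobenius after reduction mod $p$, and showing that this compatibility persists on a dense set of $s$. This is the technical heart of \cite{Bhatt_Schwede_Takagi_The_weak_ordinarity_conjecture_and_F-singularities}, and requires a careful derived-category argument together with preservation of the relevant cones under spreading out. Once this is established, step three is a formal consequence of weak ordinarity applied to the smooth strata of $E$.
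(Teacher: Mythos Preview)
The paper does not give its own proof of this theorem; immediately after the statement it says ``The proof of \autoref{thm:F_pure_reduction} is beyond the scope of the present paper, so we refer to the above references.'' So there is nothing in the paper to compare your argument against. Your outline is a reasonable high-level sketch of the strategy in the cited references, especially \cite{Mustata_Srinivas_Ordinary_varieties_and} and \cite{Bhatt_Schwede_Takagi_The_weak_ordinarity_conjecture_and_F-singularities}: reduce $F$-purity to a Frobenius-bijectivity statement on cohomology attached to a log resolution, then feed the smooth strata of the snc boundary into the weak ordinarity conjecture.

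Two points in your sketch need tightening if you want it to stand as an actual proof. First, the equivalence ``$F$-pure at $x_s$ $\Longleftrightarrow$ Frobenius injective on $H^d_{x_s}(\sO_{X_s})$'' is the characterization of $F$-\emph{injectivity}, not $F$-purity; the two coincide for (quasi-)Gorenstein rings but not in general, and a log canonical $X$ need not be Gorenstein. The references handle this either by working in the $\bQ$-Gorenstein / index-prime-to-$p$ setting or by passing to the pair-theoretic formulation (sharp $F$-purity of $(X_s,\Delta_s)$), so you should make that reduction explicit. Second, the sentence ``the cone of $\sO_X \to Rf_* \sO_Y$ is quasi-isomorphic to $Rf_* \sO_E[-1]$'' is not literally correct as stated, and in any case the Du~Bois input you invoke (Kov\'acs--Schwede) is a characteristic-zero statement; what is actually needed is a Frobenius-compatible comparison, after reduction, between the local cohomology of $X_s$ and the coherent cohomology of the reduced fiber $E_s$, and establishing this compatibility on a dense set of $s$ is precisely the nontrivial content you defer to \cite{Bhatt_Schwede_Takagi_The_weak_ordinarity_conjecture_and_F-singularities}. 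With those two caveats, your three-step plan matches the literature.
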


The proof of \autoref{thm:F_pure_reduction} is beyond the scope of the present paper, so we refer to the above references.

\begin{remark}
In some special cases there is a backwards implication as in \autoref{thm:F_pure_reduction} also in equicharacteristic. In particular, in dimension $2$, most log canonical singularities are $F$-pure \cite{Hara_Classification_of_two_dimensional_F_regular_and_F_pure_singularities}. For an example statement with proof along this line see \autoref{cor:A_1}.
\end{remark}

Assume we have a (projective) Calabi-Yau type  variety $X$ in characteristic zero. That is,  $X$ is projective, and there exists an effective $\bQ$-divisor $\Delta$ on $X$, such that $(X,\Delta)$ is log canonical and $K_X + \Delta \sim_{\bQ} 0$. Let $L$ be an ample Cartier divisor on $X$. Then we may write $L \sim_{\bQ} L + K_X + \Delta$. Hence, the log canonical version of Kodaira vanishing \cite[Thm 2.42]{Fujino_Introduction_to_the_log_minimal_model_program_for_log_canonical_pairs} yields that $H^i(X, L)=0$. This is  a fundamental property of Calabi-Yau type varieties, which is to a large extent responsible for the Mori dream space property of Fano type varieties (special cases of Calabi-Yau type varieties defined in \autoref{sec:F_regular}). 

Surprisingly, the above vanishing holds in our situation, so in characteristic $p>0$, for globally $F$-split varieties:

\begin{theorem}
\label{thm:Kodaira_vanishing_F_pure}
Let $X$ be a normal, globally $F$-split  projective variety, and let $L$ be an ample Cartier divisor on $X$. Then the following holds.
\begin{enumerate}
 \item \label{itm:Kodaira_vanishing_F_regular:positive} $H^i(X, \sO_X(L))=0$ for $i>0$. 
\item \label{itm:Kodaira_vanishing_F_regular:negative} if $X$ is Cohen-Macaulay, then $H^i(X, \sO_X(-L))=0$ for $i< \dim X$. 
\end{enumerate}
 
\end{theorem}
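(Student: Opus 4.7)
The plan is to exploit the fact that global $F$-splitting makes $\sO_X$ a direct summand of $F_* \sO_X$. Iterating Frobenius will show $\sO_X$ is a direct summand of $F^e_* \sO_X$ for every $e \geq 1$, and the projection formula will then force $H^i(X, M)$ to be a direct summand of $H^i(X, M^{\otimes p^e})$ for any line bundle $M$. Once that is in place, Serre vanishing (and, for part (2), Serre duality) will kill the group at a sufficiently high power of $L$ and hence kill $H^i(X, M)$ itself.

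First I would observe that from a splitting $s: F_* \sO_X \to \sO_X$ of the Frobenius structure map, applying the exact functor $F_*$ and composing yields inductively a splitting $s_e : F^e_* \sO_X \to \sO_X$ of the $e$-th iterate of Frobenius, for every $e \geq 1$: indeed, $s \circ F_*(s) \circ F_*(F_{\sO_X}^{\#}) \circ F_{\sO_X}^{\#} = s \circ F_*(\id) \circ F_{\sO_X}^{\#} = \id$, and analogously at higher iterates. Tensoring the resulting split injection $\sO_X \hookrightarrow F^e_* \sO_X$ with an arbitrary line bundle $M$ and invoking the projection formula identifies the target with $F^e_*\bigl(F^{e*} M\bigr) = F^e_*(M^{\otimes p^e})$, where the last equality uses that the absolute Frobenius acts on line bundles by $p^e$-th powers (transition functions are raised to the $p^e$-th power). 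Hence $M$ is a direct summand of $F^e_*(M^{\otimes p^e})$ as an $\sO_X$-module; since $F^e$ is affine, $H^i(X, F^e_* G) = H^i(X, G)$, and passing to cohomology produces a split injection
\[
H^i(X, M) \hookrightarrow H^i(X, M^{\otimes p^e})
\]
for every $i \geq 0$ and every $e \geq 1$.

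For part (1) I would then set $M = \sO_X(L)$. Since $L$ is ample, Serre vanishing gives $H^i(X, \sO_X(p^e L)) = 0$ for $i > 0$ and $e$ sufficiently large, and so $H^i(X, \sO_X(L)) = 0$ as well. For part (2) I would set $M = \sO_X(-L)$. Under the Cohen-Macaulay hypothesis, Serre duality identifies
\[
H^i(X, \sO_X(-p^e L)) \;\cong\; H^{n-i}\bigl(X, \omega_X \otimes \sO_X(p^e L)\bigr)^{\vee},
\]
where $n = \dim X$. For $i < n$ we have $n - i > 0$, so Serre vanishing applied to the ample line bundle $\sO_X(L)$ annihilates the right-hand side for $e$ large, finishing the proof.

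The argument is essentially formal once the direct-summand mechanism is set up; there is no serious obstacle. The only points requiring some care are the iteration of splittings under $F_*$ (which uses exactness of $F_*$ as an additive functor on quasi-coherent sheaves) and the compatibility $F^{e*} M \cong M^{\otimes p^e}$ for invertible sheaves. Everything else is Serre vanishing together with, in part (2), Serre duality on a Cohen-Macaulay projective variety — and it is precisely the miracle of global $F$-splitting that the vanishing Serre produces for arbitrarily high Frobenius twists descends, by direct summand, all the way down to $L$ itself.
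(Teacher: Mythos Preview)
Your proof is correct and follows essentially the same approach as the paper: iterate the splitting to $F^e$, tensor with the line bundle, use the projection formula and the fact that $F^e$ is affine to exhibit $H^i(X,\sO_X(\pm L))$ as a direct summand of $H^i(X,\sO_X(\pm p^e L))$, and conclude via Serre vanishing (together with Serre duality for part~(2)). The only cosmetic difference is that the paper writes out the commutative diagram explicitly rather than phrasing it as a split injection on cohomology.
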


\begin{proof}
Let $\psi: F_* \sO_X \to \sO_X$ be the splitting of $\sO_X \hookrightarrow F_* \sO_X$ guaranteed by the global $F$-purity of $X$. The map $\psi$ being a splitting is equivalent to the condition $\phi(1)=1$. Hence, for any integer $e >0$, $\psi \circ F_* (\psi) \circ \dots \circ F^{e-2}_*(\psi) \circ F^{e-1}_* (\psi) $ yields a splitting of the natural morphism $\sO_X \hookrightarrow F^e_* \sO_X$, where $F^j$ denoted the $j$ times composition $\underbrace{F \circ \dots \circ F}_{j \textrm{ times}}$. That is, we have a commutative diagram such as:
\begin{equation*}
\xymatrix{
\sO_X \ar[r] \ar@/^1.5pc/[rr]^{\Id} & F_*^e \sO_X \ar[r] & \sO_X
}
\end{equation*}
By first tensoring the above diagram by $\sO_X(L)$ and then applying $H^i(\_)$ to it, plus using that
\begin{itemize}
 \item $\sO_X(L) \otimes F_*^e \sO_X \cong  F_*^e \left(F^e\right)^* \sO_X(L)$ by the projection formula,
\item $\left(F^e\right)^* \sO_X(L) \cong \sO_X(p^eL)$, since $\left(F^e\right)^*$ raises each line bundle to the $p^e$-th power (because it raises the gluing functions to the $p$-th power),
\item  $H^i(X, F_*^e \sO_X(p^eL)) \cong H^i(X, \sO_X(p^eL))$, since $F^e$ is an affine morphism, 
\end{itemize}
we obtain the commutative diagram:
\begin{equation*}
\xymatrix{
H^i(X, \sO_X(L)) \ar[r] \ar@/^1.5pc/[rr]^{\Id} &  H^i(X,\sO_X(L) \otimes F_*^e \sO_X)   \cong H^i(X, \sO_X(p^eL)) \ar[r] & H^i(X,\sO_X(L))
}
\end{equation*}
Next, we note that by Serre vanishing $H^i(\sO_X(p^eL))=0$ for $e \gg 0$, which then implies that $H^i(X, \sO_X(L))=0$, which is exactly the statement of \autoref{itm:Kodaira_vanishing_F_regular:positive}. The proof of point \autoref{itm:Kodaira_vanishing_F_regular:negative} proceeds along the same line, except one needs the vanishing of $H^i(\sO_X(-p^eL))=0$ for $e \gg 0$, which follows from Serre duality, when $X$ is Cohen-Macaulay \cite[Thm III.7.6]{Hartshorne_Algebraic_geometry}.
\end{proof}


\autoref{thm:Kodaira_vanishing_F_pure} leads us to the first purely algebro geometric application to the theory: if global $F$-purity is  known for a class of varieties, then Kodaira vanishing is known for this class. For this, one has to find such a class:

\begin{example} \cite[Example (3.6)]{Hara_A_characterization_of_rational_singularities_in_terms_of_injectivity_of_Frobenius_maps}
If $X$ is a smooth del Pezzo surface over $k$, and $p >5$, then $X$ is globally $F$-pure. In fact, in \cite[Example (3.6)]{Hara_A_characterization_of_rational_singularities_in_terms_of_injectivity_of_Frobenius_maps} it is stated that such $X$ is globally $F$-regular, which implies global $F$-purity as we will see in \autoref{sec:F_regular}.
\end{example}

Then one obtains the following (which is well known in fact for the more general class of surfaces of special type except the quasi-elliptic surfaces of Kodaira dimension $1$ \cite[Thm 1.6]{Ekedahl_Canonical_models_of_surfaces_of_general_type_in_positive_characteristic}, so this is admittedly a quite weak example application only):

\begin{corollary}
If $X$ is a smooth del Pezzo surface, and $p >5$, then Kodaira vanishing holds on $X$. 
\end{corollary}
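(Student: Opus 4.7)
The plan is to combine the cited example with \autoref{thm:Kodaira_vanishing_F_pure} directly. First I would note that by the cited result of Hara (\cite[Example (3.6)]{Hara_A_characterization_of_rational_singularities_in_terms_of_injectivity_of_Frobenius_maps}), the hypothesis that $X$ is a smooth del Pezzo surface and $p > 5$ guarantees that $X$ is globally $F$-pure (in fact globally $F$-regular). Moreover, since $X$ is smooth, it is Cohen-Macaulay, so both hypotheses needed to apply \autoref{thm:Kodaira_vanishing_F_pure}\autoref{itm:Kodaira_vanishing_F_regular:negative} are in place.

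Next I would translate the conclusion of \autoref{thm:Kodaira_vanishing_F_pure}\autoref{itm:Kodaira_vanishing_F_regular:negative} into the standard form of Kodaira vanishing. Given an ample Cartier divisor $L$ on $X$, the theorem yields $H^i(X, \sO_X(-L)) = 0$ for $i < \dim X = 2$. By Serre duality on the smooth projective surface $X$, this is equivalent to $H^{2-i}(X, \sO_X(K_X + L)) = 0$ for $i < 2$, that is, $H^j(X, \sO_X(K_X + L)) = 0$ for all $j > 0$. This is exactly the statement of Kodaira vanishing on $X$.

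There is no substantive obstacle here: the whole content of the corollary is packaged in the previous example (global $F$-purity of smooth del Pezzo surfaces for $p > 5$) together with \autoref{thm:Kodaira_vanishing_F_pure}, and the only verification left is that smoothness supplies the Cohen-Macaulay hypothesis needed to invoke part \autoref{itm:Kodaira_vanishing_F_regular:negative}. If one prefers the formulation of \autoref{thm:Kodaira_vanishing_F_pure}\autoref{itm:Kodaira_vanishing_F_regular:positive} instead, one can alternatively apply it to the ample line bundle $L - K_X$ (which is ample on a del Pezzo surface since $-K_X$ is ample) to directly obtain $H^i(X, \sO_X(K_X + L)) = 0$ for $i > 0$, avoiding the explicit use of Serre duality.
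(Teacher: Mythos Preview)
Your main argument is correct and is exactly the deduction the paper has in mind: the corollary is stated without proof because it is meant to follow immediately from the preceding example (global $F$-purity of smooth del Pezzo surfaces for $p>5$) together with \autoref{thm:Kodaira_vanishing_F_pure}\autoref{itm:Kodaira_vanishing_F_regular:negative} and Serre duality, just as you wrote.

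Your alternative route at the end, however, does not work as written. Applying \autoref{thm:Kodaira_vanishing_F_pure}\autoref{itm:Kodaira_vanishing_F_regular:positive} to the ample divisor $L-K_X$ yields $H^i(X,\sO_X(L-K_X))=0$, not $H^i(X,\sO_X(K_X+L))=0$. To deduce Kodaira vanishing directly from part \autoref{itm:Kodaira_vanishing_F_regular:positive} one would need $K_X+L$ itself to be ample, and on a del Pezzo surface this fails already for $L=-K_X$. So the Serre duality argument via part \autoref{itm:Kodaira_vanishing_F_regular:negative} is the correct one; the final parenthetical remark should be dropped.
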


\subsection{$F$-regularity}
\label{sec:F_regular}

Here we discuss the different notions of $F$-regularity that relate to Kawamata log terminal singularities and Fano type varieties as $F$-purity is relating to log canonical singularities and Calabi-Yau type varieties. In fact, as we will see it soon, the statements are even nicer in this case, as $F$-regularity is  the version of $F$-purity that is stable under perturbations. This again parallels the log canonical and Kawamata log canonical analogy, since the latter is the version of the formal that is stable under perturbations by any effective divisor.

\begin{definition} \cite{Hochster_Huneke_Tight_closure_and_strong_F-regularity,Smith_Globally_F-regular_varieties_applications_to_vanishing_theorems_for_quotients_of_Fano_varieties,Schwede_Smith_Globally_F_regular_an_log_Fano_varieties}
\label{def:F_regular}
Let $X$ be affine and normal. Then  $X$ is said to be \emph{strongly $F$-regular} if for all effective divisors $D \geq 0$, the composition of the following natural maps splits for some integer $e > 0$:
\begin{equation*}
\sO_X \hookrightarrow F^e_* \sO_X \hookrightarrow F^e_* (\sO_X(D) ). 
\end{equation*}
(Here $F^e$ denotes the $e$-times composition of $F$ with itself.)

Similarly, if $X$ is projective and normal, then $X$ is said to be \emph{globally $F$-regular} if for all effective divisors $D \geq 0$, the composition of the following natural maps splits for some integer $e > 0$:
\begin{equation*}
\sO_X \hookrightarrow F^e_* \sO_X \hookrightarrow F^e_* (\sO_X(D) ). 
\end{equation*}
\end{definition}

\begin{remark}
A few remarks on \autoref{def:F_regular}:
\begin{enumerate}
\item The two definitions in \autoref{def:F_regular} are indeed formally completely the same, the only difference is that in the first case we assume $X$ to be affine, while in the second one we assume it to be projective. In particular, the latter is a global property, and as we see from the following point, the former is a singularity property.
\item One can show that strong $F$-regularity is a local property, that is, $X$ is strongly $F$-regular if and only if so are all its local rings \cite[Thm 3.1.a]{Hochster_Huneke_Tight_closure_and_strong_F-regularity}. Hence, one defines in general $X$ to be strongly $F$-regular, if any of its affine covers is strongly $F$-regular, or equivalently, if all its local rings are strongly $F$-regular. 

\item By restricting the splitting of \autoref{def:F_regular} to $F_* \sO_X \subseteq F^e_* (\sO_X(D) )$ we see that a strongly $F$-regular scheme is $F$-pure and a globally $F$-regular scheme is globally $F$-pure.
\end{enumerate}

\end{remark}

Strong and global $F$-regularity are connected to the notions of Kawamata log-terminal and Fano type varieties analogously to how (global) $F$-purity is connected to the notions of log canonical and Calabi-Yau type. For this let us recall that the definition of \emph{Kawamata log terminal} singularities is verbatim the same as of log canonical singularities, except one requires from $\Gamma$, defined in \autoref{eq:lc_def}, to have coefficients less than $1$, instead of less  than or equal to $1$. Similarly, being Fano type is analogous to being Calabi-Yau type. That is, $X$ is \emph{Fano type} if there is an effective $\bQ$-divisor $\Delta$ on $X$ such that $(X,\Delta)$ is Kawamata log-terminal, and $-(K_X +\Delta)$ is ample. Then the $F$-regular version of \autoref{thm:F_pure_lc_CY}, is verbatim the same except one has to make the following replacements:

\begin{theorem} \cite[Thm 3.3]{Hara_Watanabe_F_regular_and_F_pure_rings_vs_log_terminal_and_log_canonical} \cite[Thm 4.3 \& Thm 4.4]{Schwede_Smith_Globally_F_regular_an_log_Fano_varieties}
\label{thm:F_regular_klt_Fano}
Let $X$ be normal. 
\begin{enumerate}
 \item If $X$ is strongly $F$-regular, then $X$ has Kawamata log-terminal singularities.
\item If $X$ is projective and globally $F$-regular, then it is of Fano type.
\end{enumerate}
\end{theorem}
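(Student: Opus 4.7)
The plan is to run a Grothendieck duality argument parallel to that of \autoref{thm:F_pure_lc_CY}, using the extra splittings furnished by strong and global $F$-regularity to strengthen ``log canonical'' to ``Kawamata log-terminal'' and, in the projective case, to manufacture the amplitude required for Fano type.

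First I would set up the correspondence between splittings and effective divisors. Grothendieck duality for the finite morphism $F^e$ on the regular locus, extended by reflexivity across the codimension-two singular locus of the normal scheme $X$, produces a canonical identification
\[
\sHom_{\sO_X}\!\bigl(F^e_* \sO_X(D),\,\sO_X\bigr) \;\cong\; F^e_* \sO_X\!\bigl((1-p^e)K_X - D\bigr).
\]
A splitting $\sigma\colon F^e_* \sO_X(D) \twoheadrightarrow \sO_X$ of $\sO_X \hookrightarrow F^e_* \sO_X(D)$ corresponds under this identification to a nonzero section $s_\sigma \in H^0(X, (1-p^e)K_X - D)$; setting $\Gamma_\sigma := \Div(s_\sigma) + D$ and $\Delta_\sigma := \tfrac{1}{p^e-1}\Gamma_\sigma$, one obtains an effective $\bQ$-divisor $\Delta_\sigma \geq \tfrac{1}{p^e-1} D$ with $K_X + \Delta_\sigma \sim_{\bQ} 0$, so that $K_X + \Delta_\sigma$ is $\bQ$-Cartier. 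Moreover, $(X, \Delta_\sigma)$ is log canonical by the $F$-pure content of \autoref{thm:F_pure_lc_CY}(1), whose proof traces $\sigma$ through a log resolution $f\colon Y \to X$ and reads off the bound $\leq 1$ on the coefficients of $\Gamma$ in $K_Y + \Gamma = f^*(K_X + \Delta_\sigma)$.

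For part (1), apply the construction with $D = 0$ (allowed since strong $F$-regularity implies $F$-purity) to obtain an lc pair $(X, \Delta_\sigma)$ with $K_X + \Delta_\sigma \sim_{\bQ} 0$. The upgrade from lc to klt uses the additional flexibility of strong $F$-regularity: because the construction admits a splitting through $F^{e'}_* \sO_X(D')$ for every effective Cartier $D'$, the associated generalized test ideal satisfies $\tau(X, \Delta_\sigma) = \sO_X$, and the standard Hara--Watanabe type comparison $\tau(X, \Delta) \subseteq \sJ(X, \Delta)$ forces the multiplier ideal $\sJ(X, \Delta_\sigma)$ to be trivial, i.e., $(X, \Delta_\sigma)$ is klt. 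For part (2), fix a very ample Cartier divisor $H$ on the projective variety $X$ and apply global $F$-regularity to an effective representative $D \in |(p^e-1)H|$ for $e$ large. The resulting $\Delta_\sigma$ satisfies $\Delta_\sigma \geq H$, so $\Delta := \Delta_\sigma - H$ is effective and
\[
-(K_X + \Delta) \;=\; -(K_X + \Delta_\sigma) + H \;\sim_{\bQ}\; H,
\]
which is ample. The klt property of $(X, \Delta)$ follows from the same test-ideal argument applied globally, placing $X$ in the Fano type class.

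The main obstacle is the $F$-purity $\Rightarrow$ log canonicity input inherited from \autoref{thm:F_pure_lc_CY}. Tracing a splitting $\sigma$ along a log resolution and matching its image under Grothendieck duality on $Y$ against the coefficients of $\Gamma$ in the discrepancy formula is a Fedder-style local computation that sits at the technical heart of the entire correspondence. Once log canonicity is granted for every $\Delta_\sigma$ arising from a splitting, the upgrades from lc to klt via the test ideal comparison, and the production of an ample $-(K_X + \Delta)$ in the globally $F$-regular setting, follow essentially formally from the richer supply of splittings provided by $F$-regularity.
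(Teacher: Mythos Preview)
The paper does not give a proof of this theorem; it simply says the argument is ``very similar'' to that of \autoref{thm:F_pure_lc_CY} and omits it. That proof stays entirely within the elementary duality and DVR framework of \autoref{sec:duality_applciation}: one produces a $\Delta$ from a splitting via \autoref{lem:splitting_lemma}, lifts the splitting along any proper birational $f\colon Y\to X$, and applies the DVR bound of \autoref{prop:smaller_than_p_to_the_e} to read off the coefficient inequality. The intended modification for the $F$-regular case is to run the same argument with the extra divisor $D$ built into the splitting $\sO_X \hookrightarrow F^e_*\sO_X(D)$, so that on $Y$ one has a small ample perturbation available to turn each $\leq 1$ into $<1$, and in the projective case to make $-(K_X+\Delta)$ ample after subtracting off $\tfrac{1}{p^e-1}D$.

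Your proposal takes a different route for part~(1): rather than squeezing the strict inequality out of the DVR lemma directly, you invoke the test ideal $\tau(X,\Delta_\sigma)$ and the comparison $\tau\subseteq\sJ$ with the multiplier ideal. This is in the spirit of the original Hara--Watanabe argument, but it imports machinery the paper never sets up, and it hides the actual work. The assertion that $\tau(X,\Delta_\sigma)=\sO_X$ is exactly the statement that the \emph{pair} $(X,\Delta_\sigma)$ is strongly $F$-regular; this does not follow just from knowing that $\sO_X\hookrightarrow F^{e'}_*\sO_X(D')$ splits for all $D'$, because those splittings a priori correspond to \emph{other} boundaries $\Delta_{\sigma'}$, not to $\Delta_\sigma$. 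Showing that one fixed splitting $\sigma$ already witnesses strong $F$-regularity of the pair is precisely the non-trivial content of the Schwede--Smith theorem you are citing, and you have not supplied that argument. Moreover, the containment $\tau\subseteq\sJ$ presupposes a log resolution to define $\sJ$, which the paper deliberately avoids (its lc proof works with arbitrary proper birational $f\colon Y\to X$).

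There is also a smaller issue in part~(2): you choose $D\in|(p^e-1)H|$ ``for $e$ large'' and then appeal to global $F$-regularity, but global $F$-regularity hands you $e$ only \emph{after} $D$ is fixed, so the construction as written is circular. The fix is easy---take $D\in|H|$, obtain $e$ from the splitting, and subtract $\tfrac{1}{p^e-1}D$ rather than $H$---but as stated the inequality $\Delta_\sigma\geq H$ is not justified.
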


Since the proof of \autoref{thm:F_regular_klt_Fano} is very similar to that of \autoref{thm:F_pure_lc_CY}, we omit it. Despite the statements of \autoref{thm:F_pure_lc_CY} and \autoref{thm:F_regular_klt_Fano} being verbatim the same,  in general strongly $F$-regular singularities and globally $F$-regular varieties behave better than their $F$-pure counterparts. This parallels the well-known phenomenon that in characteristic zero Kawamta log terminal singularities and Fano type varieties behave better than log canonical singularities and Calabi-Yau type varieties. For example, smooth Fano varieties are bounded \cite{Kollar_Miyaoka_Mori_Rational_connectedness_and_boundedness_of_Fano_manifolds}, while smooth (algebraic) Calabi-Yau varieties are not (although it is conjectured that over $\bC$ their topological types are bounded). The first instance of the above phenomenon is that the reduction theorem is stronger for strongly $F$-regular varieties, than \autoref{thm:F_pure_reduction}. Indeed, on does not need to 
assume the arithmetic conjecture \autoref{conj:weak_ordinarity}:

\begin{theorem} \cite{Takagi_A_characteristic_p_analogue_of_plt_sintularities,Takagi_An_interpretation_of_multiplier_ideals_via_tight_closure,Hara_Yoshida_A_generalization_of_tight_closure_and_multiplier_ideals,Hara_A_characterization_of_rational_singularities_in_terms_of_injectivity_of_Frobenius_maps,Mehta_Srinivas_A_characterization_of_rational_singularities}
\label{thm:F_regular_reduction}
Let $X$ be a Kawamata log terminal singularity (resp. a Fano type variety)  over an algebraically closed field $k'$ of characteristic $0$. Given a model $X_A$ over a finitely generated $\bZ$-algebra $A$, 
\begin{equation*}
\{ s\in \Spec A | \textrm{$s$ is a point such that $X_{\os}$ is strongly $F$-regular (resp. globally $F$-regular)} \}
\end{equation*}
is open and dense in $\Spec A$. 
\end{theorem}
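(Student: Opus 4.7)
The plan is to circumvent the arithmetic obstruction of \autoref{conj:weak_ordinarity} by comparing two ideal sheaves whose triviality characterizes the two sides of the statement. On the characteristic-$0$ side, a pair $(X, \Delta)$ is klt precisely when its multiplier ideal $\sJ(X, \Delta)$ equals $\sO_X$, and on the characteristic-$p$ side, strong (resp.\ global) $F$-regularity of $(X_s, \Delta_s)$ is equivalent to the triviality $\tau(X_s, \Delta_s) = \sO_{X_s}$ of the test ideal. The theorem would therefore follow from a sheaf-level identity $\tau(X_s, \Delta_s) = \sJ(X, \Delta) \otimes_A \kappa(s)$ on a dense open locus of $\Spec A$, together with the constructibility needed to promote density to openness. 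The key conceptual point is that this comparison bypasses any ordinarity hypothesis precisely because the klt/Fano-type setting has enough room to absorb small perturbations, which is exactly what makes the argument available here but not in the $F$-pure case.

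I would fix an effective $\bQ$-divisor $\Delta$ witnessing the klt (resp.\ Fano-type) property of $X$, choose a log resolution $f : Y \to X$ with $K_Y + \Gamma = f^*(K_X + \Delta)$ and all coefficients of $\Gamma$ strictly less than $1$, and spread everything out so that $(X_A, \Delta_A, f_A, Y_A, \Gamma_A)$ is defined over a finitely generated $\bZ$-algebra $A$ with $f_A$ a simultaneous log resolution and all relevant sheaves $A$-flat. After possibly shrinking $\Spec A$, the formula $\sJ(X, \Delta) = f_* \sO_Y(\lceil -\Gamma \rceil)$ then spreads out in families. The central technical input is the comparison theorem of Hara--Yoshida and Takagi (extending work of Smith, Hara, and Mehta--Srinivas): for every closed point $s$ in some dense open $U \subseteq \Spec A$, one has $\tau(X_s, \Delta_s) = \sJ(X, \Delta)_s$. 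This is what I expect to be the main obstacle; its proof reinterprets the test ideal via the Grothendieck--Frobenius trace $F^e_* \sO_{Y_s}\bigl((1-p^e)(K_{Y_s} + \Gamma_s)\bigr) \to \sO_{Y_s}$, pushes forward by $f_s$, and shows that for $e \gg 0$ the image stabilizes to $(f_s)_* \sO_{Y_s}(\lceil -\Gamma_s \rceil) = \sJ(X, \Delta)_s$. The subtle point, and the reason no ordinarity input on $Y$ is needed, is that this stabilization occurs uniformly for every large $e$, so one never requires Frobenius to be bijective on any particular cohomology group.

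Granting the comparison, the klt hypothesis $\sJ(X, \Delta) = \sO_X$ immediately yields $\tau(X_s, \Delta_s) = \sO_{X_s}$ for all $s \in U$, which is exactly the statement that $(X_s, \Delta_s)$, and therefore $X_s$ itself, is strongly $F$-regular in the sense of \autoref{def:F_regular}; concretely, the splittings needed for an arbitrary effective $D$ are extracted by enlarging $\Delta_s$ by a small multiple of $D$, which remains klt. Openness of the regular locus comes for free, since $\{s : \tau(X_s, \Delta_s) = \sO_{X_s}\}$ is constructible in $\Spec A$, being cut out by the non-vanishing of finitely many Frobenius-trace images that generate the test ideal in the family. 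The Fano-type case runs along the same lines with the global analogue of the comparison, producing for every effective divisor $D_s$ on $X_s$ the required splitting $\sO_{X_s} \hookrightarrow F^e_* \sO_{X_s}(D_s)$ from the global triviality of the relevant test submodule.
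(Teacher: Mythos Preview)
The paper does not give a proof of this theorem. It is stated with citations to the original sources (Takagi, Hara--Yoshida, Hara, Mehta--Srinivas; the Fano-type direction is implicitly Schwede--Smith), and the survey moves on immediately afterward. So there is no in-paper argument to compare your proposal against.

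That said, your outline is precisely the strategy carried out in those references: reduce to the comparison $\tau(X_s,\Delta_s)=\sJ(X,\Delta)_s$ between test ideals and the reductions of the multiplier ideal on a dense open of $\Spec A$, and then conclude from the characterizations ``klt $\Leftrightarrow \sJ=\sO_X$'' and ``strongly $F$-regular $\Leftrightarrow \tau=\sO_{X_s}$''. Your explanation of why no ordinarity hypothesis is needed (stabilization of the trace images for all large $e$, rather than bijectivity on a single cohomology group) is also the right conceptual point. One place where your sketch is looser than the literature is the Fano-type/globally $F$-regular direction: the standard argument in Schwede--Smith does not invoke a ``global test submodule'' directly but instead passes to the section ring (equivalently, the affine cone over a polarization by a multiple of $-(K_X+\Delta)$), so that log Fano for $(X,\Delta)$ becomes klt for the cone and globally $F$-regular for $X_s$ becomes strongly $F$-regular for the cone; the local comparison then finishes the job. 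Your last paragraph would be on firmer ground if phrased through that reduction.
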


%

We note that for globally $F$-regular varieties, one can also strengthen the Kodaira vanishing result of \autoref{thm:Kodaira_vanishing_F_pure} in different ways, for example by allowing nef and big divisors \cite[4.2-4.4]{Smith_Globally_F-regular_varieties_applications_to_vanishing_theorems_for_quotients_of_Fano_varieties}, \cite[Thm 6.8]{Schwede_Smith_Globally_F_regular_an_log_Fano_varieties}.


\subsection{Duality theory}
\label{sec:duality}

The vast use of the above defined notions in algebraic geometry was to a great extent due to a systematic study of the dual formulation of the above notions (e.g., \cite{Schwede_F_adjunction}). Here, in \autoref{rmk:duality}, we summarize the most important facts of duality theory needed for further investigation. These concern the most manageable  case of the theory, that is, the case of finite surjective maps, which is already discussed and proven in Hartshorne's widely used graduate text book \cite{Hartshorne_Algebraic_geometry}.

\begin{remark}
\label{rmk:duality}
First, recall the definition of $f^!$  for finite morphisms $f: X \to Y$ \cite[Exc III.6.10.]{Hartshorne_Algebraic_geometry}. Let $\sG$ be a quasi-coherent $\sO_Y$-module. Then $\sHom_{\sO_Y}(f_* \sO_X, \sG)$ has a natural $f_* \sO_X$ module structure. In particular, there is an $\sO_X$-module $f^! \sG$, defined up to isomorphism, such that   $f_* f^! \sG \cong \sHom_{\sO_Y}(f_* \sO_X, \sG)$. These obey the following properties:
\begin{enumerate}
\item \label{itm:f_upper_shriek} According to \cite[Exc III.7.2]{Hartshorne_Algebraic_geometry} $f^! \omega_{Y}^0 \cong \omega_{X}^0$, where $\omega_{Y}^0$ and $\omega_{X}^0$ are the dualizing sheaves of projective equidimensional schemes over  $k$ defined in \cite[p 241, Def]{Hartshorne_Algebraic_geometry}, and will be denoted by $\omega_Y$ and $\omega_X$ here. In fact, the above isomorphism can be taken to be unique with a correct setup of the theory \cite{Hartshorne_Residues_and_duality}, which in the language of \cite[p 241, Def]{Hartshorne_Algebraic_geometry} means remembering the trace map. This subtlety is usually indifferent for the methods presented in the present paper, hence we disregard it for simplicity.

\item \label{itm:trace} For every quasi-coherent sheaf $\sG$ on $Y$ one can define a trace morphism $\Tr_{f, \sG} : f_* f^! \sG \to \sG$ as follows: since $f_* f^! \sG \cong \sHom_{\sO_Y}(f_* \sO_X, \sG)$, $\Tr_{f, \sG}$ is identified with the natural evaluation map $ \sHom_{\sO_Y}(f_* \sO_X, \sG) \ni \phi \mapsto \phi(1) \in \sG$. In particular, this yields a trace morphism $\Tr_f : f_* \omega_X \cong f_* f^! \omega_Y \to \omega_Y$. 

\item \label{itm:duality} According to \cite[Exercise III.6.10.b]{Hartshorne_Algebraic_geometry}, there is a natural isomorphism for every finite, surjective morphism $f : X \to Y$, and quasi-coherent sheaves $\sF$ and $\sG$ on $X$ and $Y$, respectively:
\begin{equation}
\label{eq:duality_finite_map}
f_* \sHom_X(\sF, f^! \sG) \cong \sHom_Y(f_* \sF, \sG).
\end{equation}

\item \label{itm:f_upper_shriek_twist_line_bundle} If $\sL$ is a line bundle on $Y$ and $\sG$ a quasi-coherent sheaf on $Y$, then $f^! (\sG \otimes \sL) \cong (f^! \sG) \otimes f^* \sL$. Indeed,
\begin{equation*}
\sHom_Y(f_* \sO_X, \sG \otimes \sL) \cong \sHom_Y(f_* \sO_X, \sG) \otimes \sL \cong (f_* f^! \sG) \otimes \sL \cong f_* ((f^! \sG) \otimes f^* \sL),
\end{equation*}
where we used the projection formula in the last step. 

\item \label{itm:f_upper_shriek_twist_divisor} For the last property, recall that if $X$ is normal, then there is a good theory of reflexive sheaves of rank one, which is equivalent to the theory of Weil divisors modulo rational equivalence. This generalizes the usual equivalence of line bundles and Cartier divisors modulo linear equivalence. Here a sheaf $\sE$ of rank $1$ is reflexive if it is a rank $1$ coherent sheaf such that the natural map $\sE \to \sE^{**}$ to the double dual is an isomorphism. Equivalently $\sE$ is reflexive if and only if $\sE \cong \iota_* \left(\sE|_{X_{\reg}} \right)$, where $\iota : X_{\reg} \hookrightarrow X$ is the usual embedding \cite[Prop 1.6]{Hartshorne_Stable_reflexive_sheaves}. Furthermore, for an arbitrary coherent sheaf $\sF$ of rank $1$ on $\sE$, $\sF^{**}$ is called the reflexive hull, which is the smallest extension of $\sF/\Tors(\sF)$ to a reflexive sheaf. The above mentioned equivalence of reflexive rank $1$ sheaves and Weil divisors is defined verbatim the same 
way as for Cartier divisors. That is, in one direction sections of rank $1$ reflexive sheaves define Weil 
divisors (indeed such sheaves are free at the codimension $1$ points by the classification of finitely generated modules over PID's). In the other direction, $(\sO_X(D))(U)$ is defined with the usual formula, $\{f \in \sK(X)|(f)+D|_U \geq 0\}$. 

Now, we can state the property we would like to use: if $X$ and $Y$ are normal, and $D$ is a Weil divisor on $Y$, then for any integer $e>0$, $\left(F^e\right)^! \sO_X(D) \cong \sO_X(K_X + p^e(D-K_X))$. This is in fact, not hard to show. First, one shows by \autoref{itm:duality} that $f^! \sO_X(D)$ is reflexive. However, then the above isomorphism can be proven on $X_{\reg}$ (by the above unique extension property from $X_{\reg}$). Second, there we obtain the statement by \autoref{itm:f_upper_shriek_twist_line_bundle} together with the fact that $F^* \sL \cong \sL^p$ for any line bundle (, since the gluing functions are raised to $p$-th power).

\end{enumerate}
\end{remark}

\subsection{Applications of duality theory}
\label{sec:F_singularities}
\label{sec:duality_applciation}

In this section we apply the facts recalled in \autoref{sec:duality}, to prove \autoref{thm:F_pure_lc_CY}. In the meanwhile, in the proofs, we also discuss important ideas for the general theory.

In the following lemmas we use the notion of Weil-divisorial sheaves overviewed in point \autoref{itm:f_upper_shriek_twist_divisor} of \autoref{rmk:duality}. Whenever we write $F_* \sO_X(D)$ for some Weil divisor $D$, we mean $F_* (\sO_X(D))$. We omit the parenthesis for simplicity.

\begin{lemma}
\label{lem:splitting_lemma}
Let $X$ be normal variety, and $D$ a Weil divisor on $X$. Then,  there is a one-to-one equivalence between splittings of
\begin{equation}
\label{eq:splitting_lemma:splitting}
\sO_X \hookrightarrow F_* \sO_X(D)
\end{equation}
and between sections $s$ of $F_* \sO_X((1-p)K_X - D)$ satisfying
\begin{equation*}
\xymatrix@C=2pt{
s \ar@{|->}@/^3pc/[rrrrrrrr]^{\ } & \in  F_* \sO_X((1-p)K_X - D) \ar[rrr] &&& F_* \sO_X((1-p)K_X ) \ar[rrr]^-{\Tr_{F, \sO_X}} &&& \sO_X \ni &  1
},
\end{equation*}
given by applying the duality functor $\sHom_X(\_, \sO_X)$. 
\end{lemma}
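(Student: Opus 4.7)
The plan is to apply the duality functor $\sHom_X(\_, \sO_X)$ to the inclusion $j : \sO_X \hookrightarrow F_* \sO_X(D)$ in \autoref{eq:splitting_lemma:splitting} and to translate the splitting condition into the prescribed condition on a section. First I would unpack the splitting condition: a splitting of $j$ is a morphism $\phi : F_* \sO_X(D) \to \sO_X$ with $\phi \circ j = \id_{\sO_X}$; under the natural identification $\sHom_X(\sO_X, \sO_X) = \sO_X$ sending $\id$ to $1$, this is exactly $j^{\ast}(\phi) = 1$. So splittings correspond bijectively to those $\phi \in \sHom_X(F_* \sO_X(D), \sO_X)$ whose image under $j^{\ast}$ equals $1 \in \sO_X$.

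Next I would identify $\sHom_X(F_* \sO_X(D), \sO_X)$ with $F_* \sO_X((1-p)K_X - D)$. By point \autoref{itm:duality} of \autoref{rmk:duality} this sheaf equals $F_* \sHom_X(\sO_X(D), F^! \sO_X)$, and point \autoref{itm:f_upper_shriek_twist_divisor} (applied with $e = 1$ and the zero divisor) gives $F^! \sO_X \cong \sO_X((1-p)K_X)$. Since both $\sO_X(D)$ and $\sO_X((1-p)K_X)$ are reflexive of rank one on the normal variety $X$, the sheaf $\sHom_X(\sO_X(D), \sO_X((1-p)K_X))$ is the reflexive rank-one sheaf $\sO_X((1-p)K_X - D)$: check this on $X_{\reg}$ and extend by the reflexivity principle invoked in point \autoref{itm:f_upper_shriek_twist_divisor}. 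Taking sections, $\phi$ corresponds to a section $s$ of $F_* \sO_X((1-p)K_X - D)$.

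Finally I would compute $j^{\ast}$ explicitly by factoring $j = \iota \circ F^{\#}$, where $F^{\#} : \sO_X \to F_* \sO_X$ is the Frobenius structure map and $\iota : F_* \sO_X \hookrightarrow F_* \sO_X(D)$ is the inclusion coming from $D \geq 0$. By naturality, the dual of $\iota$ is the inclusion $F_* \sO_X((1-p)K_X - D) \hookrightarrow F_* \sO_X((1-p)K_X)$. The key step, and the main obstacle I expect, is to identify the dual of $F^{\#}$ with the trace $\Tr_{F, \sO_X}$. This will follow directly from the definition of the trace in point \autoref{itm:trace}: there $\Tr_{F, \sO_X}$ is the map corresponding to evaluation $\psi \mapsto \psi(1)$ under $F_* F^! \sO_X \cong \sHom_X(F_* \sO_X, \sO_X)$, which is exactly the map on Hom induced by $F^{\#}$ sending $1 \mapsto 1$. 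Combining the three steps, the dual of $j$ is the composition stated in the lemma, and the condition $j^{\ast}(\phi) = 1$ becomes the claimed section-level condition on $s$. All remaining verifications are formal bookkeeping with reflexive sheaves; normality of $X$ is what makes this bookkeeping work.
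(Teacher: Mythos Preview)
Your proposal is correct and follows essentially the same approach as the paper: both apply the duality functor $\sHom_X(\_,\sO_X)$ to the splitting diagram, identify $\sHom_X(F_*\sO_X(D),\sO_X)\cong F_*\sO_X((1-p)K_X-D)$ via \autoref{rmk:duality}, and recognize the resulting map to $\sO_X$ as the trace. Your explicit factorization $j=\iota\circ F^{\#}$ and the identification of $(F^{\#})^\ast$ with $\Tr_{F,\sO_X}$ via point \autoref{itm:trace} just make transparent what the paper compresses into ``applying the duality functor to the entire diagram.''
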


\begin{proof}
The splitting of \autoref{eq:splitting_lemma:splitting} is given by a diagram as follows.
\begin{equation}
\label{eq:splitting_lemma:splitting_functorial}
\xymatrix{
 \sO_X \ar@/^1.5pc/[rr]^{\Id} \ar[r] & F_* \sO_X(D) \ar[r] & \sO_X 
}
\end{equation}
Note that applying $\sHom_X(\_, \sO_X)$ to $F_* \sO_X(D)$ yields:
\begin{multline}
\label{eq:splitting_lemma:trace_map}
\sHom_X(F_* \sO_X(D),\sO_X) \cong 
\underbrace{F_* \sHom_X(\sO_X(D), F^! \sO_X)}_{\textrm{\eqref{itm:duality} of \autoref{rmk:duality}}} 
\cong 
\underbrace{F_* \sHom_X(\sO_X(D), \sO_X((1-p)K_X))}_{\textrm{\eqref{itm:f_upper_shriek_twist_line_bundle} of \autoref{rmk:duality}}} 
\\ \cong F_* \sO_X((1-p)K_X -D) .
\end{multline}
Hence, by applying the duality functor $\sHom_X(\_, \sO_X)$ to the entire \autoref{eq:splitting_lemma:splitting_functorial}, we obtain that \autoref{eq:splitting_lemma:splitting_functorial} is equivalent to 
\begin{equation}
\label{eq:splitting_lemma:dual}
\xymatrix{
 \sO_X  & \ar[l]  F_* \sO_X((1-p)K_X -D) \ar[l] &  \ar[l] \ar@/_1.5pc/[ll]_{\Id}  \sO_X .
}
\end{equation}
Equation \autoref{eq:splitting_lemma:dual} is then equivalent to the existence of a section of $F_* \sO_X((1-p)K_X -D) $ mapping to $1$ via the map of \autoref{eq:splitting_lemma:trace_map}.
\end{proof}

\begin{remark}
\label{rmk:F_adjunction}
Ideas in the proof of \autoref{lem:splitting_lemma} lead to a more general equivalence between divisors and maps $F_*^e \sL \to \sO_X$, where $\sL$ is a line bundle \cite[Rem 9.5]{Schwede_F_adjunction}. This in turn leads to a more general $F$-adjunction theory \cite{Schwede_F_adjunction}, which  turns out to be using the same different as the usual adjunction theory of the Minimal Model Program \cite[Thm 5.3]{Das_On_strongly_F_regular_inversion_of_adjunction}.
\end{remark}

\begin{lemma}
\label{prop:smaller_than_p_to_the_e}
Let $X:=\Spec R$ for a DVR $R$, and let $D$ be the divisor defined by the local parameter. If $\sO_X \hookrightarrow F_* \sO_X(rD)$ splits for some integer $r>0$, then $r <p$. 

\end{lemma}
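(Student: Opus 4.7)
The plan is to apply \autoref{lem:splitting_lemma} with the Weil divisor there taken to be $rD$. By that lemma, a splitting of $\sO_X \hookrightarrow F_* \sO_X(rD)$ is equivalent to the existence of a section $s \in F_* \sO_X((1-p)K_X - rD)$ whose image under the composition
\begin{equation*}
F_* \sO_X((1-p)K_X - rD) \hookrightarrow F_* \sO_X((1-p)K_X) \xrightarrow{\Tr_F} \sO_X
\end{equation*}
is $1 \in \sO_X$. Since $R$ is a one-dimensional regular local ring, $\omega_R$ is free of rank one over $R$; fixing a generator trivializes $\sO_X((1-p)K_X) \cong \sO_X$ and identifies $\sO_X((1-p)K_X - rD)$ with the ideal $t^r R \subseteq R$, where $t \in R$ is a uniformizer. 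Under these identifications the composition above becomes the restriction of the Frobenius trace $\Tr_F : F_* R \to R$ to $F_*(t^r R)$, and the existence of a splitting becomes the assertion that $1$ lies in the image of this restriction.

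The key tool is the $F_*$-linearity of the trace, which falls straight out of the duality set-up of \autoref{rmk:duality}: for all $a \in R$ and all $y \in F_* R$,
\begin{equation*}
\Tr_F(a^p y) \;=\; \Tr_F(a \cdot_{F_*} y) \;=\; a \cdot \Tr_F(y).
\end{equation*}
Now write $r = pq + s$ with $0 \leq s < p$, so that $t^r R = (t^q)^p \cdot t^s R$ as an $R$-submodule of $R$. By $F_*$-linearity,
\begin{equation*}
\Tr_F(t^r R) \;=\; \Tr_F\!\bigl((t^q)^p \cdot t^s R\bigr) \;=\; t^q \cdot \Tr_F(t^s R) \;\subseteq\; t^q R.
\end{equation*}
If $r \geq p$ then $q \geq 1$, so $\Tr_F(t^r R) \subseteq tR = \mathfrak{m}$, the maximal ideal of $R$. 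In particular $1 \not\in \Tr_F(t^r R)$, contradicting the existence of the splitting. Therefore $r < p$.

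The only step requiring a moment of care --- rather than a genuine obstacle --- is the identification in the first paragraph: once $\omega_R$ is trivialized, the abstract trace of \autoref{lem:splitting_lemma} becomes the ordinary Frobenius trace $F_* R \to R$, and the sub-sheaf $\sO_X((1-p)K_X - rD)$ becomes the ideal $t^r R$. Granted this, the argument is a one-line calculation with $F_*$-linearity, and notably does not require the residue field of $R$ to be perfect or $R$ to be complete.
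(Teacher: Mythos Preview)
Your proof is correct, but it takes a different route from the paper's. You dualize via \autoref{lem:splitting_lemma} and then compute directly with the trace: the $F_*$-linearity identity $\Tr_F(a^p y)=a\,\Tr_F(y)$ immediately forces $\Tr_F(t^rR)\subseteq t^{\lfloor r/p\rfloor}R$, which kills the required section once $r\geq p$. The paper instead argues on the original side, without invoking \autoref{lem:splitting_lemma} or the trace at all: if $r\geq p$, the projection formula gives a factorization $\sO_X\hookrightarrow \sO_X(D)\hookrightarrow \sO_X(D)\otimes F_*\sO_X((r-p)D)\cong F_*\sO_X(rD)$, so the image of $1$ is already $t$-divisible in the target; any $\sO_X$-linear retraction would then produce a unit multiple of $t$, a contradiction. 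The two arguments are essentially Grothendieck-dual to one another, and both hinge on the same phenomenon that multiplication by $t$ on the base corresponds to multiplication by $t^p$ inside $F_*$. The paper's version is slightly more self-contained (no appeal to $\omega_R$ or the trace formalism), while yours is a clean illustration of how the dual picture set up in \autoref{lem:splitting_lemma} converts splitting questions into concrete image computations for $\Tr_F$.
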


\begin{proof}
Let $t \in R$ be the local parameter. For any injection $\iota: \sO_X \hookrightarrow \sE$ into a free coherent sheaf (which is equivalent to torsion-free here), if $\iota$ splits, then there cannot be $s \in \sE$ such that $\iota(1)= t s$. Indeed, if there was such an $s$, then for the splitting $\psi: \sE \to \sO_X$ we would have $t\psi(s)=\psi(\iota(1))=1$. This is impossible. 

On the other hand, if $r \geq p$, then $\sO_X \hookrightarrow F_* \sO_X(rD)$ factors as 
\begin{equation*}
\xymatrix{
\sO_X \ar@{^(->}[r]_{\alpha} \ar@/^1.5pc/[rr]^-{\iota} & \sO_X(D) \ar@{^(->}[r]_-{\beta} & \sO_X(D) \otimes F_* \sO_X((r-p)D) \cong F_* \sO_X(rD). 
}
\end{equation*}
Therefore, $\alpha(1)=t s'$ for some $s ' \in \sO_X(D)$, and then for $s:= \beta (s')$ we have $\iota(1)= t s$.
\end{proof}

\begin{proposition}
\label{prop:F_pure_boundary}
Let $X$ be either affine and $F$-pure or projective and globally $F$-pure, and normal in either case.  
Then, there is an effective $\bQ$-divisor $\Delta$ on $X$ such that $(p-1)(K_X + \Delta) \sim 0$ (where $\sim$ means actual linear equivalence, not only $\bQ$-linear equivalence), and furthermore, the natural map 
\begin{equation}
\label{eq:F_pure_boundary:splitting}
\sO_X \hookrightarrow F_* \sO_X((p-1)\Delta)
\end{equation}
splits. 
\end{proposition}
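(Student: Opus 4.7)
The plan is to apply \autoref{lem:splitting_lemma} twice: once in the forward direction to extract an effective divisor from the hypothesized $F$-splitting, and once in reverse to promote it back to an $F$-splitting with boundary. By hypothesis the inclusion $\sO_X \hookrightarrow F_*\sO_X$ admits a global splitting --- in the affine $F$-pure case this is immediate (the splitting may be taken on the affine $X$ itself), and in the projective globally $F$-pure case this is the definition. Applying \autoref{lem:splitting_lemma} with $D = 0$, this splitting corresponds to a global section $s \in H^0(X, F_*\sO_X((1-p)K_X))$ whose image under $\Tr_{F,\sO_X} : F_*\sO_X((1-p)K_X) \to \sO_X$ equals the unit section $1$.

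From $s$ I would read off the boundary. Using the reflexive-sheaf/Weil-divisor dictionary of point \autoref{itm:f_upper_shriek_twist_divisor} of \autoref{rmk:duality}, the nonzero section $s$ defines its zero divisor $D_s := (s)_0 \geq 0$, which satisfies $D_s \sim (1-p)K_X$. Setting $\Delta := \tfrac{1}{p-1} D_s$ then produces an effective $\bQ$-divisor with
\[
(p-1)(K_X + \Delta) = (p-1)K_X + D_s \sim (p-1)K_X + (1-p)K_X = 0,
\]
which is the first half of the claim.

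To obtain the splitting $\sO_X \hookrightarrow F_*\sO_X((p-1)\Delta)$, the key observation is that $s$, precisely because its zero divisor equals $D_s$, factors uniquely through the subsheaf $F_*\sO_X((1-p)K_X - D_s) \hookrightarrow F_*\sO_X((1-p)K_X)$; the lifted section $\tilde s$ is in fact a nowhere-vanishing trivialization of $\sO_X((1-p)K_X - D_s)$, consistent with $(1-p)K_X - D_s \sim 0$. Since the composition
\[
F_*\sO_X((1-p)K_X - D_s) \hookrightarrow F_*\sO_X((1-p)K_X) \xrightarrow{\Tr_{F,\sO_X}} \sO_X
\]
sends $\tilde s$ to $\Tr_{F,\sO_X}(s) = 1$, applying \autoref{lem:splitting_lemma} once more --- now with $D = D_s = (p-1)\Delta$ --- converts $\tilde s$ into the desired splitting.

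The only real subtlety is the lifting step: verifying that $s$ genuinely descends to the smaller subsheaf, and that its trace-image is preserved under this reindexing. Both reduce to the single divisorial identity $(f) + (1-p)K_X - D_s = 0$, where $f$ is the rational function representing $s$; this equation is built into the definition of $D_s$. Once the duality bookkeeping is packaged inside \autoref{lem:splitting_lemma} (as it already is), no further obstacle remains --- the essential trick is simply to use the zero divisor of the dual section $s$ as the boundary.
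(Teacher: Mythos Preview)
Your proof is correct and follows essentially the same approach as the paper: apply \autoref{lem:splitting_lemma} with $D=0$ to obtain a section $s$ of $\sO_X((1-p)K_X)$ mapping to $1$ under the trace, set $\Delta$ to be $\tfrac{1}{p-1}$ times its zero divisor, and then reapply \autoref{lem:splitting_lemma} with $D=(p-1)\Delta$ after noting that $s$ lifts to the (trivializing) section of $\sO_X((1-p)K_X - D_s)\cong \sO_X$. Your write-up is slightly more explicit about the lifting step than the paper's, but the argument is the same.
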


\begin{proof}
Let $\psi : F_* \sO_X \to  \sO_X$ be the splitting given by the $F$-pure assumption. 
%
Let $s$ be the section given by \autoref{lem:splitting_lemma}, with $D=0$.
Setting then $D:=V(s)$ (where $s$ regarded as an element of $H^0(X, \sO_X((1-p)K_X))$ and $\Delta:=\frac{D}{p-1}$, we have
\begin{equation*}
(p-1)(K_X + \Delta) \sim (p-1)K_X + D \sim (p-1)(K_X -K_X)=0.
\end{equation*}
Furthermore, since $s$ maps to $1$ via $\Tr_{F,\sO_X}$, so does $1$ via the following composition. 
\begin{equation*}
F_* \sO_X((1-p)(K_X + \Delta))  \to F_* \sO_X((1-p)K_X) \to \sO_X
\end{equation*}
Then, \autoref{lem:splitting_lemma} applied with $D=(p-1)\Delta$ yields the splitting of \autoref{eq:F_pure_boundary:splitting}.
\end{proof}

\begin{proof}[Proof of \autoref{thm:F_pure_lc_CY}]
 Let $f : Y \to X$ be a  birational, proper map from a normal variety. Let $\Delta$ be the $\bQ$-divisor guaranteed by \autoref{prop:F_pure_boundary}, and let $s \in H^0(X, F_* \sO_X((1-p)(K_X + \Delta))) \cong H^0(X,  \sO_X((1-p)(K_X + \Delta)))$ be the section guaranteed by \autoref{lem:splitting_lemma} (by setting $D:= (p-1) \Delta$). That is, $s$ surjects onto $\sO_X$ via $F_* \sO_X((1-p)(K_X + \Delta)) \to \sO_X$. Define the $\bQ$-divisor $\Gamma$ via the following equality of Cartier divisors, assuming that $f_* K_Y = K_X$.
\begin{equation*}
(1-p)f^* (K_X + \Delta) = (1-p)(K_Y + \Gamma)
\end{equation*}
 Hence $s$ induces a section $t$ of $\sO_X((1-p)(K_Y + \Gamma))$. Consider now the following diagram.
\begin{equation*}
\xymatrix@C=0pt{
tu \in  \ar@/^3pc/[rrrrrr]^{\ } &  F_* f_* \sO_Y ((1-p)(K_Y + \Gamma)) \cong f_* F_* \sO_Y ((1-p)(K_Y + \Gamma)) \ar[rrrr]  \ar@{=}[d] &&&& f_* \sO_Y \ar@{=}[d] & \ni 1  \ar@{<->}[d] \\
s \in  \ar@/_3pc/[rrrrrr]_{\ } & F_* \sO_X ((1-p)(K_X + \Delta)) \ar[rrrr] &&&& \sO_X  & \ni 1
}
\end{equation*}
The diagram commutes with the advanced setup of dualizing theory, since it commutes over the open set $U \subseteq X$ over which $f$ is an isomorphism, and furthermore $\codim_X X \setminus U \geq 2$ (see \autoref{rmk:duality}.\autoref{itm:f_upper_shriek} for a discussion on this). In our setup, based on only facts from \cite{Hartshorne_Algebraic_geometry}, trace maps are only defined up to pre-multiplication by a unit in the source. In our case this means a unit $u \in H^0(Y, F_* \sO_Y)$. However, then $tu$ will be taken to $1$ over $U$, and then using the above codimension condition, also globally. Using now \autoref{lem:splitting_lemma} we obtain that the map 
\begin{equation*}
\sO_Y \hookrightarrow F_* \sO_Y ((p-1) \Gamma)
\end{equation*}
admits a splitting. However, then \autoref{prop:smaller_than_p_to_the_e} implies that $\coeff_E ((p-1) \Gamma) \leq p-1$ at every prime divisor $E$ of $Y$, which is equivalent to saying that $\coeff_E\Gamma \leq 1$. This is exactly the condition of log canonicity. Hence we have finished our proof. 
\end{proof}

\section{Newer methods - finding sections}
\label{sec:newer_methods}

Many results in higher dimensional algebraic geometry can be shows by exhibiting sections of line bundles with certain positivity. In characteristic 0,  this is usually done by considering exact sequences of the type 
\begin{equation*}
H^0(X, \sL) \to H^0(Z, \sL) \to H^1(X, \sI_Z \otimes \sL),
\end{equation*}
(where $Z$ is a closed subvariety of $X$, and $\sL$ is a line bundle), and using that $H^1(X, \sI_Z \otimes \sL)$ is zero by some vanishing theorem. Unfortunately, these vanishing theorems fail collectively in positive characteristic. 
Having introduced the basic notions of the Frobenius method in \autoref{sec:basic_notions}, as well as having reviewed their relation to the notions of birational geometry, we proceed here with presenting the recent methods to finding sections, and hence circumventing the above mentioned failure of vanishing theorems. 

We state each statement in a special case, and we refer to the original sources for the full generality. On the other hand, we also present proofs, which hopefully in these specialized setups are easier to follow than their original appearances. We also present some  sample applications in \autoref{cor:pushforward}, \autoref{cor:pushforward_geometric}, \autoref{cor:sample} and \autoref{cor:A_1}. For a list of further, mostly more involved, applications see the articles surveyed in \autoref{sec:application}. 

\subsection{Cartier modules}
\label{sec:Cartier_modules}


In characteristic $0$, the theory of the classification of algebraic varieties largely depends on Hodge theory, so on objects that are either $\sD$-modules or are closely related to $\sD$-modules. For example, one can think about Kodaira vanishing and its souped up versions, or about the theory of variations of Hodge structures, etc. This $\sD$-module theoretic point of view was unified by Saito's celebrated theory of Hodge modules (e.g., \cite{Saito_A_young_person_s_guide_to_mixed_Hodge_modules}). In this point of view, roughly all the above mentioned Hodge theoretic foundations of higher dimensional algebraic geometry are of $\sD$-module theoretic origin. 

The main feature of $\sD$-modules on varieties over $\bC$ is the Riemann-Hilbert correspondence, that is, they are equivalent to constructible sheaves in the adequate sense \cite{Deligne_Equations_differentielles_a_points_singuliers_reguliers,Kashiwara_The_Riemann-Hilbert_problem_for_holonomic_systems}. Surprisingly, there have been quite a few versions of this correspondence shown in positive characteristic for \'etale, constructible $\bF_p$ sheaves in the last one and a half decades \cite{Emerton_Kisin_The_Riemann-Hilbert_correspondence_for_unit_F-crystals,Bockle_Pink_Cohomological_theory_of_crystals_over_function_fields,Blickle_Bockle_Cartier_modules_finiteness_results}. The latter of these result uses the category of \emph{Cartier modules}, or rather a localization of it along a Serre subcategory. Then, taking into account the  characteristic zero phenomena mentioned in the previous paragraph, it is perhaps not an enormous surprise that Cartier modules became very important recently for the algebro 
geometric implications of \autoref{qtn:module}. Before proceeding to the actual definition, we also note that if one wants to avoid constructible sheaves, there is also a direct relation, although not one-to-one in any sense, between Cartier modules and $\sD$-modules \cite[Chapter 1]{Blickle_The_D_module_structure_of_R_F_modules}.

\begin{definition}
A \emph{Cartier module} on $X$ is a triple $(\sM, e, \phi)$, where $\sM$ is a coherent sheaf on $X$, $e>0$ is an integer, and $\phi : F^e_* \sM \to \sM$ is homomorphism of coherent sheaves. 

 In this setting $\phi^s : F^{s\cdot e}_* \sM \to \sM$ is defined as the composition of the following homomorphisms.
\begin{equation*}
\xymatrix@C=50pt{
F^{s \cdot e}_* \sM \ar[r]^{F^{(s-1) \cdot e}_* \phi } & F^{(s-1) \cdot e}_* \sM \ar[r]^{F^{(s-2) \cdot e}_* \phi} &  \dots \ar[r]^{F^{e}_* \phi} & F^e_* \sM \ar[r]^\phi & \sM 
}
\end{equation*}

\end{definition}

\begin{example}
\label{ex:Cartier_module_P_1}
It is an easy exercise to  show that $F_* \sO_{\bP^1} \cong \sO_{\bP^1} \oplus \left(  \sO_{\bP^1}(-1)^{\oplus (p-1)} \right)$. In particular, by projecting to the first factor, we obtain a Cartier module structure on $\sO_{\bP^1}$.
\end{example}

\begin{example}
\label{ex:Cartier_module_dualizing}
The most typical example of Cartier modules are coming from dualizing sheaves. Indeed, using the notations of \autoref{rmk:duality}, $\Tr_F : F_* \omega_X \to \omega_X$ is a Cartier module. Furthermore, if $f : X \to Y$ is a proper morphism, one can endow also $R^i f_* \omega_X$  with a Cartier module structure. Indeed, by pushing forward $\Tr_F$ we obtain a homomorphism $\psi : R^i f_* F_* \omega_X \to R^i f_* \omega_X$.  Then using that $f \circ F= F \circ f$ and that $F$ is an affine morphism, we obtain that $F_* R^i f_* \omega_X \cong R^i f_* F_* \omega_X$. Composing this isomorphism with $\psi$ yields the desired Cartier module structure on $R^i f_* \omega_X$. 
\end{example}

\begin{proposition} \cite[Lemma 13.1]{Gabber_Notes_on_some_t_structures}  \cite[Prop 1.11]{Hartshorne_Speiser_Local_cohomological_dimension_in_characteristic_p} \cite{Lyubeznik_F_modules_applications_to_local_cohomology_and_D_modules_in_characteristic_p_greater_than_0} \cite[Proposition 8.1.4]{Blickle_Schwede_p_to_the_minus_one_linear_maps_in_algebra_and_geometry}
\label{prop:Cartier_module_stabilizes}
If  $(\sM,\phi, e)$ is a Cartier module, then the descending chain $\sM \supseteq \im \phi \supseteq \im \phi^2 \supseteq \dots $ stabilizes.
\end{proposition}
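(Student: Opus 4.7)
The statement is local on $X$, so I reduce to the affine case $X = \Spec R$ with $\sM = M$ a finitely generated $R$-module. Using $\phi^{s+1} = \phi \circ F^e_*\phi^s$ one immediately sees $M_{s+1} := \im \phi^{s+1} = \phi(F^e_* M_s) \subseteq M_s$, so the images $M_s$ form a descending chain of coherent $R$-submodules of $M$, each stable under $\phi$, with the restriction $F^e_* M_s \to M_s$ having image exactly $M_{s+1}$. The task is therefore to show that this chain of coherent subsheaves stabilizes.

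\textbf{Main obstacle and strategy.} Noetherianity of $R$ provides only the ascending chain condition, and coherent subsheaves genuinely admit infinite descending chains (consider $\sO_X \supset \sO_X(-D) \supset \sO_X(-2D) \supset \cdots$), so the Cartier structure must be used essentially. My strategy is to dualize in order to convert the descending chain into an ascending one. Applying Grothendieck--Serre duality $\sRHom_X(-, \omega_X^\bullet)$ to $\phi : F^e_* M \to M$, and invoking duality for the finite morphism $F^e$ from \autoref{rmk:duality}.\autoref{itm:duality} together with the identification $(F^e)^! \omega_X \cong \omega_X$ (the case $D = K_X$ of \autoref{rmk:duality}.\autoref{itm:f_upper_shriek_twist_divisor}), the dual complex $N := \sRHom_X(M, \omega_X^\bullet)$ acquires a morphism $N \to F^e_* N$, equivalently $F^{e*} N \to N$ via the adjunction $F^{e*} \dashv F^e_*$. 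On cohomology sheaves this is precisely the structure of a (unit) $F$-module in the sense of Lyubeznik.

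\textbf{Conclusion and the hard part.} Under the resulting (anti-)equivalence between coherent Cartier modules and finitely generated $F$-modules, the descending chain $\{M_s\} \subseteq M$ corresponds to an \emph{ascending} chain of sub-$F$-modules of the finitely generated $R$-module underlying $N$; Noetherianity of $R$ then forces that ascending chain — and hence $\{M_s\}$ — to stabilize. The genuinely nontrivial inputs are (i) the categorical correspondence between Cartier modules and $F$-modules and (ii) the ACC for sub-$F$-modules of a finitely generated $F$-module, and both are precisely what the cited references establish: for cofinite $M$ (e.g.\ a local cohomology module supported at a closed point) this is the classical Matlis-duality argument of Hartshorne--Speiser, while for general coherent Cartier modules one invokes Lyubeznik's framework of $F$-finite modules or Blickle--B\"ockle's cohomological theory of crystals. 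Once these duality and finiteness statements are granted, the stabilization of $\{M_s\}$ is a purely formal translation of Noetherianity of $R$.
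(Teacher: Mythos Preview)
The paper gives no proof of this proposition, only citations; the argument in those references (Gabber's lemma, as exposited in Blickle--Schwede) proceeds entirely differently and does not use duality.

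Your proposal has a genuine gap. The asserted ``anti-equivalence between coherent Cartier modules and finitely generated $F$-modules'' landing in finitely generated $R$-modules does not exist: Lyubeznik's $F$-finite modules are almost never finitely generated over $R$ (local cohomology modules are the prototypical examples), so the sentence ``Noetherianity of $R$ then forces that ascending chain to stabilize'' is a non sequitur. If you instead intend coherent Grothendieck duality, then $N = \sRHom(M,\omega_X^\bullet)$ is a complex, and dualizing the inclusions $M_s \hookrightarrow M$ produces a tower of maps $N \to N_s$, not subobjects of a fixed Noetherian module; extracting an honest ACC statement from this at the derived level is not automatic. The Matlis-duality argument you attribute to Hartshorne--Speiser treats \emph{Artinian} modules with a left Frobenius action, which is not the coherent setting of the proposition. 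Finally, deferring the ``genuinely nontrivial inputs'' (i) and (ii) to the cited references is circular: those references are exactly where the proposition is proved.

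For contrast, the actual argument is direct and elementary. Since $\phi$ induces surjections $F^e_*(M_s/M_{s+1}) \twoheadrightarrow M_{s+1}/M_{s+2}$ and $F$ is a homeomorphism, the closed subsets $\Supp(M_s/M_{s+1})$ form a descending chain, which stabilizes by topological Noetherianity of $|X|$. If the stable support is nonempty, one localizes at a generic point of it, where the successive quotients become finite-length modules, and a short length argument forces a contradiction. No duality is involved.
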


\begin{definition} 
The stable image of \autoref{prop:Cartier_module_stabilizes} is denoted by $\sigma (\sM)$. 
\end{definition}

%

The above definition allows us to define an ideal which measures how much a singularity is not $F$-pure.

\begin{definition}
Assume that $\omega_X$ is a line bundle. Then $\sigma(X) \subseteq \sO_X$ is the ideal, such that $\sigma(X) \otimes \omega_X$ is the stable image of the Cartier module $(\omega_X, \Tr_F)$. 
\end{definition}

\begin{proposition}
\label{prop:sigma}
$\sigma(X) = \sO_X$ if and only if $X$ is $F$-pure.
\end{proposition}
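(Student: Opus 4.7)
The plan is to translate both sides of the equivalence into surjectivity of the trace map $\Tr_F : F_* \omega_X \to \omega_X$. Precisely, I will show separately that (a) $\sigma(X) = \sO_X$ is equivalent to surjectivity of $\Tr_F$, and (b) $F$-purity of $X$ is equivalent to surjectivity of $\Tr_F$ via the duality theory of \autoref{rmk:duality}. Combining (a) and (b) gives the proposition.

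For (a), by definition $\sigma(X) = \sO_X$ iff the stable image $\sigma(\omega_X)$ of the Cartier module $(\omega_X, \Tr_F)$ equals $\omega_X$. By \autoref{prop:Cartier_module_stabilizes}, $\sigma(\omega_X) = \im \Tr_F^s$ for $s \gg 0$, and in particular $\sigma(\omega_X) \subseteq \im \Tr_F$, so $\sigma(\omega_X) = \omega_X$ forces $\Tr_F$ to be surjective. Conversely, if $\Tr_F$ is surjective, then because $F$ is an affine morphism each $F^{je}_* \Tr_F$ is also surjective, and therefore so is the composition
\begin{equation*}
\Tr_F^s = \Tr_F \circ F^e_* \Tr_F \circ \dots \circ F^{(s-1)e}_* \Tr_F
\end{equation*}
for every $s$, giving $\sigma(\omega_X) = \omega_X$.

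For (b), by the remark following \autoref{qtn:splitting}, $X$ is $F$-pure iff the evaluation map $\ev : \sHom_X(F_* \sO_X, \sO_X) \to \sO_X$, $\psi \mapsto \psi(1)$, is surjective. Applying \eqref{itm:duality} and \eqref{itm:f_upper_shriek_twist_line_bundle} of \autoref{rmk:duality} yields
\begin{equation*}
\sHom_X(F_* \sO_X, \sO_X) \cong F_* F^! \sO_X \cong F_* \omega_X^{1-p} \cong (F_* \omega_X) \otimes \omega_X^{-1},
\end{equation*}
where the last isomorphism is the projection formula. By \eqref{itm:trace} of \autoref{rmk:duality}, $\ev$ is identified under this chain of isomorphisms with $\Tr_{F, \sO_X}$, which in turn coincides with $\Tr_F \otimes \omega_X^{-1}$. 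Since twisting a morphism of coherent sheaves by a line bundle preserves surjectivity, $\ev$ is surjective iff $\Tr_F$ is.

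The one subtle point is the identification in (b) of $\ev$ with the twist of $\Tr_F$; this is essentially built into the construction of $F^!$ via \eqref{itm:trace} of \autoref{rmk:duality}, and the up-to-unit ambiguity of the trace noted in \eqref{itm:f_upper_shriek} of the same remark is harmless for checking surjectivity. Otherwise the proof is a direct assembly of \autoref{prop:Cartier_module_stabilizes}, \autoref{rmk:duality}, and the definition of $F$-purity.
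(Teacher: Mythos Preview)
Your proof is correct and follows essentially the same approach as the paper's: both identify $F$-purity with surjectivity of the trace map via duality. Your version is slightly more self-contained in that you work directly at $e=1$ (using that surjectivity of $\Tr_F$ propagates to all iterates $\Tr_F^s$), whereas the paper routes through splitting of $\sO_X \to F_*^e \sO_X$ for all $e>0$ and then cites \cite[Exc 2.8]{Schwede_Tucker_A_survey_of_test_ideals} for the reduction to $e=1$.
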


\begin{proof}
We may assume that $X$ is affine. The equality $\sigma(X)=\sO_X$ is equivalent to $F_*^e \omega_X \to \omega_X$ being surjective. By duality this is equivalent (as in \autoref{lem:splitting_lemma}) to the splitting of $\sO_X \to F^e_* \sO_X$ for all integers $e >0$. This can be seen to be equivalent to the splitting of only $\sO_X \to F_* \sO_X$ \cite[Exc 2.8]{Schwede_Tucker_A_survey_of_test_ideals}, which is exactly the definition of $F$-purity. 
\end{proof}

\subsection{Positivity results on Cartier modules}
\label{sec:positivity_Cartier_modules}

The main tool for finding sections using Cartier modules is the following theorem. 

\begin{theorem} (c.f., \cite{Keeler_Fujita_s_conjecture_and_Frobenius_amplitude,Schwede_A_canonical_linear_system,Patakfalvi_On_subadditivity_of_Kodaira_dimension_in_positive_characteristic_over_a_general_type_base})
\label{thm:Cartier_module_globally_generated}
If $(\sM, \phi , e)$ is a coherent Cartier module, $\sA$ is an ample globally generated line bundle and $\sH$ is an arbitrary ample line bundle on $X$, then $\sigma(\sM) \otimes \sA^{\dim X} \otimes \sH$ is globally generated. 
\end{theorem}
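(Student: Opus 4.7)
The plan is to reduce the statement via Castelnuovo--Mumford regularity to a list of cohomology vanishings that can then be attacked using Frobenius iterates of the stabilized Cartier module.

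First, by \autoref{prop:Cartier_module_stabilizes} the descending chain $\sM \supseteq \im \phi \supseteq \im \phi^{2} \supseteq \cdots$ is eventually constant with stable value $\sigma(\sM)$. In particular, for $s$ large enough the restriction $\phi^{s} \colon F^{se}_{*} \sigma(\sM) \twoheadrightarrow \sigma(\sM)$ is surjective. Replacing $e$ by $se$, I may therefore assume throughout that $\phi$ itself is surjective onto $\sigma(\sM)$, and hence that every iterate $\phi^{t} \colon F^{te}_{*} \sigma(\sM) \twoheadrightarrow \sigma(\sM)$ is surjective.

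Next, by Castelnuovo--Mumford regularity with respect to the globally generated ample line bundle $\sA$, global generation of $\sigma(\sM) \otimes \sA^{\dim X} \otimes \sH$ follows once the vanishings
\begin{equation*}
H^{i} \bigl( X,\, \sigma(\sM) \otimes \sH \otimes \sA^{\dim X - i} \bigr) = 0 \qquad (1 \leq i \leq \dim X)
\end{equation*}
are established (for $i > \dim X$ they hold by Grothendieck's bound on cohomological dimension). Fix such an $i$ and set $\sL_{i} := \sH \otimes \sA^{\dim X - i}$, which is ample because $\sH$ is ample and $\sA^{\dim X - i}$ is nef. Tensoring the surjection $\phi^{t}$ with $\sL_{i}$ and applying the projection formula together with $(F^{te})^{*} \sL_{i} \cong \sL_{i}^{p^{te}}$ yields a surjection
\begin{equation*}
F^{te}_{*} \bigl( \sigma(\sM) \otimes \sL_{i}^{p^{te}} \bigr) \twoheadrightarrow \sigma(\sM) \otimes \sL_{i}.
\end{equation*}
Because $F^{te}$ is affine, the cohomology of the source equals $H^{i}\bigl( X,\, \sigma(\sM) \otimes \sL_{i}^{p^{te}} \bigr)$, which vanishes for $t \gg 0$ by Serre's theorem applied to the ample line bundle $\sL_{i}$.

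The main obstacle is that a surjection of coherent sheaves does not automatically induce a surjection on cohomology: denoting the kernel of the displayed surjection by $\sK_{t}$, the long exact sequence gives only
\begin{equation*}
H^{i} \bigl( X, \sigma(\sM) \otimes \sL_{i} \bigr) \hookrightarrow H^{i+1}\bigl( X, \sK_{t} \bigr).
\end{equation*}
The case $i = \dim X$ closes immediately since $H^{i+1}$ vanishes by Grothendieck. For $i < \dim X$ one must control the higher cohomology of $\sK_{t}$, and I expect this to be the technical heart of the argument. The way I would proceed is to exploit that $\sK_{t}$ inherits a natural Cartier module structure from the ambient one and satisfies $\sigma(\sK_{t}) = 0$ by construction, so that iterating the surjections $\phi^{t'}$ for $t' > t$ in a compatible tower produces kernels with successively better vanishing properties; alternatively, one can perform a descending induction on $i$, feeding the already-established vanishing at index $i+1$ through an analysis of the kernel. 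Reconciling these Cartier kernels with Mumford's regularity criterion is the crux of the proof, and for the full bookkeeping we would refer to the sources cited in the statement.
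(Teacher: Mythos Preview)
Your proposal contains a genuine gap, and the fix is much simpler than the workarounds you sketch. You correctly identify the problem: from the surjection
\[
F^{te}_{*}\bigl(\sigma(\sM)\otimes\sL_i^{\,p^{te}}\bigr)\twoheadrightarrow \sigma(\sM)\otimes\sL_i
\]
one cannot conclude that $H^i$ of the target vanishes just because $H^i$ of the source does. Your suggested remedies (a Cartier-module structure on the kernel $\sK_t$ with $\sigma(\sK_t)=0$, or a descending induction on $i$) are not justified as stated --- the kernel of a twisted map like this has no obvious Cartier structure, and nothing in the setup forces its stable part to vanish --- and in any case they are unnecessary.

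The paper's argument avoids this difficulty by swapping the order of the two steps. The point is that \emph{global generation} is inherited by quotients, whereas \emph{cohomology vanishing} is not. So instead of trying to prove that $\sigma(\sM)\otimes\sA^{\dim X}\otimes\sH$ is Castelnuovo--Mumford regular, one proves directly that $F^{se}_*\sM\otimes\sA^{\dim X}\otimes\sH$ is $0$-regular with respect to $\sA$: by the projection formula and affineness of $F$ one has
\[
H^i\bigl(X,\,F^{se}_*\sM\otimes\sA^{\dim X-i}\otimes\sH\bigr)\cong H^i\bigl(X,\,\sM\otimes(\sA^{\dim X-i}\otimes\sH)^{p^{se}}\bigr),
\]
which vanishes for $s\gg 0$ by Serre vanishing since $\sA^{\dim X-i}\otimes\sH$ is ample. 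Hence $F^{se}_*\sM\otimes\sA^{\dim X}\otimes\sH$ is globally generated, and then the surjection $F^{se}_*\sM\twoheadrightarrow\sigma(\sM)$ (tensored with the line bundle) immediately gives global generation of $\sigma(\sM)\otimes\sA^{\dim X}\otimes\sH$. No analysis of kernels is needed.
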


\begin{proof}
Set $n:= \dim X$. By \autoref{prop:Cartier_module_stabilizes},  $\phi^s : F^{s \cdot e}_*  \sM \to \sigma(\sM)$ is surjective  for every integer $s  \gg 0$. Therefore it is enough to prove that $F^{s \cdot e}_* \sM \otimes \sA^n \otimes \sH$ is globally generated for every $e \gg 0$. Hence, by \cite[Theorem 1.8.5]{Lazarsfeld_Positivity_in_algebraic_geometry_I} it is enough to prove that for every $e \gg 0$ and $i>0$,
\begin{equation*}
H^i(X,F^{s \cdot e}_* \sM  \otimes \sA^{n-i} \otimes \sH) = 0 .
\end{equation*}
However,
\begin{multline*}
H^i\left(X,F^{s \cdot e}_* \sM \otimes \sA^{n-i} \otimes \sH\right) 
\cong \underbrace{H^i\left(X,  F^{s \cdot e}_*  \left( \sM  \otimes F^{s \cdot e,*} \sA^{n-i} \otimes  F^{s \cdot e,*} \sH \right) \right)}_{\textrm{projection formula}} 
\\ \cong \underbrace{H^i\left(X, \sM  \otimes F^{s \cdot e,*} \sA^{n-i} \otimes  F^{s \cdot e,*} \sH\right) }_{\textrm{$F$ is affine}}  
\cong H^i \left(X, \sM  \otimes  (\sA^{n-i} \otimes  \sH)^{p^{s \cdot e}} \right).
\end{multline*}
Since $n -i \geq 0$ and both $A$ and $H$ are ample, Serre-vanishing concludes our proof.
\end{proof}

\autoref{thm:Cartier_module_globally_generated} says that (non-nilpotent) Cartier modules cannot be arbitrarily negative:

\begin{corollary}
Every Cartier module structure on $\sO_{\bP^n}(d)$ is nilpotent for $d< -n -1$ (nilpotent means that the stable submodule is zero).
\end{corollary}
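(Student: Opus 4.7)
The plan is to derive a contradiction from assuming that some Cartier module structure $(\sO_{\bP^n}(d), \phi, e)$ with $d < -n-1$ has nonzero stable image $\sigma := \sigma(\sO_{\bP^n}(d)) \subseteq \sO_{\bP^n}(d)$. The key observation is that \autoref{thm:Cartier_module_globally_generated} forces positivity on $\sigma$ after twisting by an explicit amount, while the assumption on $d$ prevents any such twist of a subsheaf of $\sO_{\bP^n}(d)$ from being globally generated.

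First, I would apply \autoref{thm:Cartier_module_globally_generated} with the ample globally generated line bundle $\sA = \sO_{\bP^n}(1)$ and the ample line bundle $\sH = \sO_{\bP^n}(1)$. This gives that $\sigma \otimes \sO_{\bP^n}(n+1)$ is globally generated. Since $\sigma$ is by construction a coherent subsheaf of $\sO_{\bP^n}(d)$, tensoring with the line bundle $\sO_{\bP^n}(n+1)$ (which is exact) exhibits $\sigma \otimes \sO_{\bP^n}(n+1)$ as a coherent subsheaf of $\sO_{\bP^n}(d+n+1)$.

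Next, I would use that a nonzero globally generated coherent sheaf must have a nonzero global section (otherwise the map from $H^0(\sF) \otimes \sO_X \to \sF$ has trivial source, contradicting surjectivity for $\sF \neq 0$). Hence, if $\sigma \neq 0$, then $H^0(\bP^n, \sigma \otimes \sO_{\bP^n}(n+1)) \neq 0$. On the other hand, the inclusion into $\sO_{\bP^n}(d+n+1)$ induces an injection on global sections, and under the assumption $d + n + 1 < 0$ we have $H^0(\bP^n, \sO_{\bP^n}(d+n+1)) = 0$. This is the contradiction we want, forcing $\sigma = 0$, which is exactly the nilpotency of the Cartier module.

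I do not expect any serious obstacle here: the whole argument is a one-line application of \autoref{thm:Cartier_module_globally_generated} once one identifies the correct choices of $\sA$ and $\sH$ (both equal to $\sO_{\bP^n}(1)$, which together supply exactly the twist $\sO_{\bP^n}(n+1)$ needed to cross the threshold $d+n+1 < 0$). The only small point to verify is that being a subsheaf of $\sO_{\bP^n}(k)$ for $k<0$ genuinely prevents global sections, which follows immediately from the left exactness of $H^0$ and $H^0(\bP^n, \sO_{\bP^n}(k))=0$ for negative $k$.
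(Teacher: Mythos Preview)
Your proof is correct and follows essentially the same approach as the paper: both apply \autoref{thm:Cartier_module_globally_generated} with $\sA = \sH = \sO_{\bP^n}(1)$ to force global generation of $\sigma \otimes \sO_{\bP^n}(n+1)$, and then derive a contradiction from $d+n+1<0$. The only cosmetic difference is that the paper first identifies $\sigma$ as a line bundle $\sO_{\bP^n}(d')$ with $d' \leq d$ (nonzero subsheaves of line bundles on $\bP^n$ being line bundles) and phrases the contradiction as $\sO_{\bP^n}(d'+n+1)$ being nef, whereas you argue directly via global sections of the subsheaf; both conclude identically.
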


\begin{proof}
Assume there is a non-nilpotent Cartier module structure  on $\sO_{\bP^n}(d)$. Then $\sigma(\sO_{\bP^n}(d))$ is a non-zero coherent submodule of $\sO_{\bP^n}(d)$. In particular, it is a line bundle $\sO_{\bP^n}(d')$ for some $d' \leq d$. Apply now \autoref{thm:Cartier_module_globally_generated} with $\sA= \sH := \sO_{\bP^n}(1)$. Then we obtain that $\sO_{\bP^n}(d' +n +1)$ is nef. Hence $d' + n+1 \geq 0$, and then $d \geq d' \geq -n -1$.
\end{proof}

Here is our baby application of \autoref{thm:Cartier_module_globally_generated}. It is a positive characteristic analogue of the famous characteristic zero statement of Fujita that $f_* \omega_{X/Y}$ is a nef vector bundle for a fibration over a curve \cite{Fujita_On_Kahler_fiber_spaces}.  This particular statement applies for example to any Gorenstein degeneration of a smooth, projective variety with full rank Hasse-Witt matrix (see the definition in the statement). Examples of such varieties are ordinary abelian varieties, or ordinary K3 surfaces. For other results in this direction, using related methods, see \cite{Patakfalvi_Semi_positivity_in_positive_characteristics,Patakfalvi_On_subadditivity_of_Kodaira_dimension_in_positive_characteristic_over_a_general_type_base,Ejiri_Weak_positivity_theorem_and_Frobenius_stable_canonical_rings_of_geometric_generic_fibers,Ejiri_Iitaka_s_C_n_m_conjecture_for_3-folds_in_positive_characteristic}. After \autoref{cor:pushforward} we give a sample geometric application in 
\autoref{cor:pushforward_geometric}.

\begin{corollary}
\label{cor:pushforward}
Assume the base field is algebraically closed. Let $f : X \to T$ be a Gorenstein, flat,  projective morphism of pure dimension $n$ to a smooth, projective curve, such that general fibers $X_t$ are smooth varieties with full rank (non-zero) Hasse-Witt matrix, where the latter is the matrix (in any basis) of the action of Frobenius on $H^n\left(X_t, \sO_{X_t}\right)$. Then $f_* \omega_{X/T}$ is nef.
\end{corollary}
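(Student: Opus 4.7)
The plan is to construct natural injections $\sE \hookrightarrow (F^e)^* \sE$ of $\sE := f_* \omega_{X/T}$ into each of its iterated absolute Frobenius pullbacks on $T$ (writing $F := F_T$), and then contradict the hypothetical existence of a quotient line bundle of $\sE$ of negative degree.

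For the construction of the first-level injection $\tau \colon \sE \to F^* \sE$, I would use the relative Frobenius $F_{X/T} \colon X \to X^{(1)}$, where $X^{(1)} := X \times_{T,F} T$ with structure map $f^{(1)} \colon X^{(1)} \to T$ and $f = f^{(1)} \circ F_{X/T}$.  The trace of the finite morphism $F_{X/T}$, combined with the factorization $\omega_{X/T} \cong F_{X/T}^* \omega_{X^{(1)}/T} \otimes \omega_{X/X^{(1)}}$ and the projection formula, gives $(F_{X/T})_* \omega_{X/T} \to \omega_{X^{(1)}/T}$.  Pushing forward by $f^{(1)}$ and applying flat base change along $F$ (which is flat on the smooth curve $T$ by Kunz) produces $\tau \colon \sE \to F^* \sE$.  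Equivalently, $\tau$ is the adjoint of the Cartier module structure $\phi \colon F_* f_* \omega_X \to f_* \omega_X$ of \autoref{ex:Cartier_module_dualizing}, after identifying $f_* \omega_X \cong \sE \otimes \omega_T$ via the projection formula for $\omega_X = \omega_{X/T} \otimes f^* \omega_T$.

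Next I would argue that $\tau$ is injective.  Cohomology and base change (available here thanks to flatness and the Gorenstein hypothesis) identifies the fiber of $\tau$ at a general $t \in T$ with the classical Cartier operator $H^0(X_t, \omega_{X_t}) \to H^0(X_t^{(1)}, \omega_{X_t^{(1)}})$, which by Serre duality is transpose to the action of Frobenius on $H^n(X_t, \sO_{X_t})$.  The full-rank Hasse-Witt hypothesis makes the latter, hence $\tau|_t$, bijective on a dense open $U \subseteq T$; $\tau$ is therefore an isomorphism at the generic point of $T$, and so an honest injection of sheaves because $\sE$ is torsion-free.  Iterating $F^*$ (which is exact on $T$ by flatness of $F$) produces injections $\sE \hookrightarrow (F^e)^* \sE$ for every $e \geq 1$, with torsion cokernels $Q_e$.

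For the contradiction, dualize $0 \to \sE \to (F^e)^* \sE \to Q_e \to 0$: torsion of $Q_e$ and local freeness of $(F^e)^* \sE$ kill the outer $\sHom$ and $\sExt^1$ terms, leaving injections $(F^e)^*(\sE^\vee) \hookrightarrow \sE^\vee$.  Suppose $\sE$ is not nef, so some quotient line bundle $\sE \twoheadrightarrow \sL$ has $d := \deg \sL < 0$.  Dually $\sL^{-1} \hookrightarrow \sE^\vee$ is a sub-line-bundle of degree $-d > 0$, and pulling back and composing gives chains $(F^e)^* (\sL^{-1}) \hookrightarrow (F^e)^* (\sE^\vee) \hookrightarrow \sE^\vee$ exhibiting sub-line-bundles of $\sE^\vee$ of degree $-p^e d$.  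As $e \to \infty$ this degree tends to $+\infty$, contradicting the finiteness of $\mu_{\max}(\sE^\vee)$ on the smooth projective curve $T$.  Hence $d \geq 0$ and $\sE$ is nef.

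I expect the main technical obstacle to be the fiberwise identification of $\tau$ in the second paragraph --- checking that the trace map really restricts to the Cartier operator, and that the full-rank Hasse-Witt hypothesis upgrades to the bijectivity of $\tau_\eta$ at the generic point of $T$.  A secondary concern is local freeness of $\sE$ on $T$, needed for the duality and iteration arguments to proceed without torsion issues; this can be arranged via cohomology and base change, using the Hasse-Witt rank hypothesis to pin down $h^0(X_t, \omega_{X_t})$.
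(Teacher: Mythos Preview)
Your proof is correct and takes a genuinely different route from the paper's. Both arguments begin identically: the Hasse--Witt hypothesis forces the Cartier/trace map to be generically an isomorphism, and both then exploit Frobenius amplification to rule out negative quotients. The divergence is in how the amplification is cashed out. The paper invokes the Cartier-module machinery developed earlier in the survey: it shows $\sigma(f_*\omega_X)\subseteq f_*\omega_X$ has full rank, applies \autoref{thm:Cartier_module_globally_generated} to get that $\sigma(f_*\omega_X)\otimes\sA\otimes\sH$ is globally generated, and deduces a uniform lower bound $\deg\sQ\geq -\rk(\sQ)(4g(T)-1)$ valid after any Frobenius base change $T^e\to T$; letting $e\to\infty$ forces $\deg\sQ\geq 0$. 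Your argument bypasses \autoref{prop:Cartier_module_stabilizes} and \autoref{thm:Cartier_module_globally_generated} entirely: you build the injections $\sE\hookrightarrow (F^e)^*\sE$ directly, dualize, and contradict finiteness of $\mu_{\max}(\sE^\vee)$. This is more elementary and self-contained, but is specific to curve bases (where $\mu_{\max}$ is automatically finite), whereas the paper's approach is meant to advertise the general Cartier-module toolkit. Regarding your stated concerns: local freeness of $\sE$ is immediate---it is torsion-free on a smooth curve, exactly as the paper observes in its first line---so no Hasse--Witt input is needed there; and the fiberwise identification of $\tau$ with the Cartier operator (Serre-dual to the Frobenius action on $H^n(X_t,\sO_{X_t})$) is standard and goes through as you describe, with cohomology and base change applying at general (smooth) fibers.
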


\begin{proof}

First, note that $f_* \omega_{X/T}$ is torsion-free, since it is the push-forward of a torsion-free sheaf. Since $T$ is a smooth curve, this means that $f_* \omega_{X/T}$ is in fact locally free. Then,  by duality \cite{Kleiman_Relative_duality_for_quasicoherent_sheaves} one obtain that $R^n f_* \sO_X \cong \sHom_T( f_* \omega_{X/T}, \sO_T)$ is also locally free. 

Consider now the commutative diagram on the left for $t \in T$ a general closed point, which induces then the other diagram on the right.
\begin{equation*}
\xymatrix{
X \ar[r]^F & X  \\
X_t \ar[r]^F \ar@{^(->}[u] & X_t \ar@{^(->}[u]
} \qquad 
\raisebox{-20pt}{$\Rightarrow$}
\qquad \xymatrix{
F_* \sO_X \ar@{->>}[d]  & \ar[l] \sO_X \ar@{->>}[d] \\
F_* \sO_{X_t} & \ar[l] \sO_{X_t}
}
\end{equation*}
Let $n$ be the relative dimension of $f$, and apply $R^n f_*( \_ )$ to the latter diagram. This yields
\begin{equation}
\label{eq:pushforward:derived_direct}
\xymatrix{
F_*  R^n f_* (\sO_X) \cong R^n f_* (F_* \sO_X) \ar[d]  & \ar[l] R^n f_* (\sO_X) \ar[d] \\
H^n\left(X_t,F_* \sO_{X_t}\right) & \ar[l] H^n \left(X_t,\sO_{X_t}\right)
}.
\end{equation}
By cohomology and base-change \cite[Thm III.12.11]{Hartshorne_Algebraic_geometry}, $(R^n f_* \sO_X) \otimes k(t) \cong H^n (X_t, \sO_{X_t})$ via right vertical map of \autoref{eq:pushforward:derived_direct}. Furthermore, the bottom horizontal arrow is exactly the Frobenius action on $H^n (X_t, \sO_{X_t})$, which is then injective. It follows then that $(R^n f_* (\sO_X)) \otimes k(t) \to (F_*  R^n f_* (\sO_X)) \otimes k(t)$ is injective. Since $R^n f_* \sO_X$ is locally free, it follows that $R^n f_* (\sO_X) \to F_*  R^n f_* (\sO_X)$ is already injective. In particular, by applying the  duality functor $\sHom_T(\_, \omega_T)$ we obtain that $F_* f_* \omega_X \to f_* \omega_X$ is generically surjective. This then implies, using \autoref{prop:Cartier_module_stabilizes}, that {\it $\sigma(f_* \omega_X) \subseteq f_* \omega_X$ has full rank.}

Now, we have to prove that for every quotient bundle $f_* \omega_{X/T} \twoheadrightarrow \sQ$, $\deg \sQ \geq 0$. In fact, \emph{it is enough to prove that there is an integer $d$ (possibly negative) such that for every integer $e \geq 0$, $\deg \left( F^e \right)^* \sQ \geq d$}. Indeed, if we prove this then $p^e \det \sQ = \det \left( F^e \right)^* \sQ \geq d$ for every integer $e \geq 0$, and letting $e \to \infty$ proves our claim. 

Denote by $T^e$ the source of $F^e$ to distinguish between the source and the target. Then by flat base-change ($F$ is flat, since $T$ is smooth), $\left( F^e \right)^* f_* \omega_{X/T} = \left( f_{T^e} \right)_* \omega_{X_{T^e}/T^e}$. Hence, $\left( F^e \right)^* \sQ$ is a quotient of $\left( f_{T^e} \right)_* \omega_{X_{T^e}/T^e}$. Hence, since $f_{T^e}$ satisfies all the assumptions that we have on $f$, it is enough to show the above statement only for $f$. That is, \emph{it is enough to show that there is an integer, $d$ depending only on $T$ and $\rk(\sQ)$ such that $\deg \sQ \geq d$}.

Choose now any line bundle $\sA$ of degree $2g(T)$ and any degree $1$ line bundle $\sH$. Then $\sA$ is ample and globally generated by \cite[Cor IV.3.2]{Hartshorne_Algebraic_geometry}. Hence, by \autoref{thm:Cartier_module_globally_generated}, $\sigma(f_* \omega_X) \otimes \sL \otimes \sH$ is globally generated, which in turn implies that $(f_* \omega_{X/Y}) \otimes \omega_Y \otimes \sA \otimes \sH$ is generically globally generated, hence so is $\sQ \otimes \omega_Y \otimes \sA \otimes \sH$. In particular, the latter sheaf has degree at least zero, and hence 
\begin{equation*}
\deg \sQ \geq - \rk(\sQ) \deg (\omega_Y \otimes \sA \otimes \sH) = - \rk (\sQ)(4 g(T)-1).
\end{equation*}
This concludes our proof. 
\end{proof}

There are many more geometric statements that one can deduce from \autoref{cor:pushforward} and the above mentioned related results. We give only one example, which was chosen admittedly arbitrarily.

\begin{corollary}
\label{cor:pushforward_geometric}
Let $f : X \to T$ be a smooth Calabi-Yau $3$ fold (where we only use that $\omega_X \cong \sO_X$) fibered over a curve with generic fibers being smooth ordinary K3 surfaces or ordinary abelian surfaces (where we only use that $\omega_{X_t} \cong \sO_{X_t}$ and that the Hasse-Witt matrix has full rank).  Then $g(T) \leq 1$.
\end{corollary}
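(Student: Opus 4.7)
The plan is to reduce the statement to an application of \autoref{cor:pushforward}, using that in the Calabi--Yau situation the sheaf $f_*\omega_{X/T}$ has a particularly simple form. The only non-routine input is verifying that the hypotheses of \autoref{cor:pushforward} are satisfied and then carrying out the resulting numerical computation on $T$.

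First I would check the hypotheses of \autoref{cor:pushforward}. Since $X$ and $T$ are both smooth and $f$ is surjective onto a smooth curve, $f$ is automatically flat (flatness over a Dedekind base) and Gorenstein (as a morphism between smooth varieties). Projectivity is part of the assumptions. The general fiber $X_t$ is by hypothesis either an ordinary K3 surface or an ordinary abelian surface; in both cases $\dim H^2(X_t,\sO_{X_t})=1$, so the Hasse--Witt matrix is a $1\times 1$ matrix, and ordinariness exactly says that the action of Frobenius on $H^2(X_t,\sO_{X_t})$ is non-zero, hence bijective. Thus all hypotheses of \autoref{cor:pushforward} are met, and I conclude that $f_*\omega_{X/T}$ is a nef sheaf on $T$.

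Next I would identify $f_*\omega_{X/T}$ explicitly. Because $\omega_X\cong\sO_X$, we have
\begin{equation*}
\omega_{X/T}=\omega_X\otimes f^*\omega_T^{\vee}\cong f^*\omega_T^{\vee}.
\end{equation*}
Applying the projection formula yields $f_*\omega_{X/T}\cong \omega_T^{\vee}\otimes f_*\sO_X$. Since the geometric general fibers of $f$ are connected reduced varieties with $h^0(X_t,\sO_{X_t})=1$, the coherent sheaf $f_*\sO_X$ is a line bundle whose formation commutes with base change, and the natural map $\sO_T\to f_*\sO_X$ is an isomorphism on a dense open set; being a map between line bundles on a smooth curve it is therefore globally an isomorphism (or one invokes Stein factorization to conclude directly). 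Hence $f_*\omega_{X/T}\cong\omega_T^{\vee}$.

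Finally, nefness of $\omega_T^{\vee}$ on the curve $T$ means $\deg\omega_T^{\vee}\geq 0$, i.e.\ $\deg\omega_T=2g(T)-2\leq 0$, which gives $g(T)\leq 1$. I do not anticipate a substantial obstacle: the only point that needs care is the clean identification $f_*\sO_X\cong\sO_T$ in order to get a line bundle on the nose, and everything else is a direct invocation of \autoref{cor:pushforward} together with the Calabi--Yau hypothesis $\omega_X\cong\sO_X$.
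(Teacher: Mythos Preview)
Your proof is correct and in fact a bit cleaner than the paper's. Both arguments begin by invoking \autoref{cor:pushforward} to get that $f_*\omega_{X/T}$ is a nef line bundle, but from there they diverge. You use the Calabi--Yau hypothesis $\omega_X\cong\sO_X$ together with the projection formula and $f_*\sO_X\cong\sO_T$ to identify $f_*\omega_{X/T}\cong\omega_T^{\vee}$ on the nose, after which the genus bound is immediate from $\deg\omega_T^{\vee}\ge 0$. The paper instead argues by contradiction: it only records that $f_*\omega_{X/T}$ has nonnegative degree, writes $K_{X/T}\sim f^*D+\Gamma$ with $D$ a divisor of $f_*\omega_{X/T}$ and $\Gamma\ge 0$ effective, and then observes that $0\sim K_X\sim f^*(D+K_T)+\Gamma$ would force $\kappa(X)\ge 1$ if $g(T)>1$. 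Your route is more elementary because it avoids the Kodaira-dimension step and the auxiliary effective divisor $\Gamma$; the paper's route is a standard template that would continue to work in situations where one cannot pin down $f_*\omega_{X/T}$ exactly (for instance when $\omega_X$ is only numerically trivial or when the adjunction map $f^*f_*\omega_{X/T}\to\omega_{X/T}$ fails to be an isomorphism along some fibers). The one place in your argument that deserves a word of care is the claim $f_*\sO_X\cong\sO_T$: the phrasing ``a map of line bundles which is generically an isomorphism is globally an isomorphism'' is not quite right on a curve, but your parenthetical appeal to Stein factorization (using that $X$ is normal and the generic fiber is connected) does the job.
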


\begin{proof}
By \autoref{cor:pushforward}, $f_* \omega_{X/T}$ is nef and non-zero. Furthermore, since the general fibers are Calabi-Yau varieties for which $\omega_{X_t} \cong \sO_{X_t}$, $f_* \omega_{X/T}$ is in fact of rank $1$ and hence a line bundle. Additionally, then by the same reason $f_* f^* \omega_{X/T} \to \omega_{X/T}$ is an isomorphism at the generic fiber of $f$. So, if $D$ is a divisor of the line bundle $f_* \omega_{X/T}$, then for some effective divisor $\Gamma$ on $X$, $K_{X/T} \sim f^* D + \Gamma$, where $\deg D \geq 0$. Hence,
\begin{equation*}
\underbrace{0 \sim K_X}_{\textrm{Calabi-Yau assumption on $X$}} = K_{X/T} + f^* K_T \sim f^* (D  + K_T ) + \Gamma
\end{equation*}
Now, if $g(T) >1$, then $\deg (K_T +D) >0$, and so  for the Kodaira dimension, $\kappa(f^* (D+ K_T) + \Gamma) \geq 1$ holds. This  contradicts the  $K_X \sim 0 $ in the above equation.
\end{proof}

\subsection{Frobenius stable sections and the lifting lemma}
\label{sec:Frobenius_stable}

A twist of the Cartier module $F_*  \omega_X \to \omega_X$, explained in \autoref{ex:Cartier_module_dualizing}, shows up in adjunction situations \cite{Schwede_F_adjunction}. This led eventually to another method of finding sections, which in turn led to the advances in 3-fold Minimal Model Program \cite{Hacon_Xu_On_the_three_dimensional_minimal_model_program_in_positive_characteristic}. We present here the simplified version of the theory.

\begin{definition}
\label{def:S_0}
Let $X$ be a smooth projective variety, $D$ an effective divisor  and $\sL$ a line bundle on it. Then 
\begin{equation*}
S^0(X, D;\sL) : = \im \left( H^0\left(X, \sL \otimes  F^e_* \sO_X((1-p^e)(K_X +D)) \right) \to H^0(X, \sL)  \right)
\end{equation*}
for any integer $e \gg 0$. Here, the map is induced from $\Tr_{F^e, \sO_X}$ by precomposing with ``multiplication'' by $(p^e-1)D$, then tensoring with $\sL$, and finally applying $H^0(\_)$ to it. 

The above maps factor through each other, and hence the images form a descending sequence of subspaces in a finite dimensional vector space. Hence, for $e \gg 0$ the images stabilize,  and hence the definition makes sense.
\end{definition}

\begin{notation}
In the above situation, if $D=0$, then instead of $S^0(X, 0; \sL)$ we sometimes write $S^0(X; \sL)$.
\end{notation}

\begin{theorem} \cite[Prop 5.3]{Schwede_A_canonical_linear_system}
\label{thm:lifting}
Let $X$ be a smooth projective variety, $D$ a smooth effective divisor and $\sL$ a line bundle, with $L$ a divisor corresponding to $\sL$. If $L-K_X -D$ is ample, then there is a natural surjection
\begin{equation*}
S^0(X, D; \sL ) \twoheadrightarrow S^0(D; \sL|_D).
\end{equation*}
\end{theorem}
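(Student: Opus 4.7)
The plan is to deduce the surjection from the standard restriction short exact sequence combined with Serre vanishing and a compatibility of trace maps. Specifically, I would twist $0\to\sO_X(-D)\to\sO_X\to\sO_D\to 0$ by $\sO_X((1-p^e)(K_X+D))$ and invoke adjunction $(K_X+D)|_D\sim K_D$ (valid because $X$ and $D$ are both smooth) to obtain the short exact sequence
\[
0\to \sO_X((1-p^e)(K_X+D)-D)\to \sO_X((1-p^e)(K_X+D))\to \sO_D((1-p^e)K_D)\to 0.
\]
Pushing forward by $F^e$ is exact (as $F^e$ is affine), and tensoring with $\sL$ is exact (as $\sL$ is locally free), so the associated long exact sequence in cohomology involves exactly the groups whose $H^0$'s underlie the definitions of $S^0(X,D;\sL)$ and of $S^0(D;\sL|_D)$.

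The first concrete task is the vanishing $H^1\bigl(X,\sL\otimes F^e_*\sO_X((1-p^e)(K_X+D)-D)\bigr)=0$ for $e\gg 0$, which makes the middle$\to$right map surjective on $H^0$. Using the projection formula and that $F^e$ is affine, this group equals $H^1$ of the line bundle of divisor class $p^eL+(1-p^e)(K_X+D)-D$. A short manipulation rewrites this class as $K_X+p^eA$, where $A:=L-K_X-D$ is ample by hypothesis, whereupon Serre vanishing applied to $\omega_X\otimes\sO_X(p^eA)$ supplies the required vanishing for all $e\gg 0$.

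The step I expect to be the main obstacle is verifying the commutativity of
\[
\xymatrix{
F^e_*\sO_X((1-p^e)(K_X+D))\ar[r]\ar[d] & F^e_*\sO_D((1-p^e)K_D)\ar[d]\\
\sO_X\ar[r] & \sO_D,
}
\]
where the top arrow is the restriction from the sequence above, the left vertical is ``multiply by the canonical section of $(p^e-1)D$, then apply $\Tr_{F^e,\sO_X}$'', and the right vertical is $\Tr_{F^e,\sO_D}$. This is an instance of the $F$-adjunction principle alluded to in \autoref{rmk:F_adjunction}: the Cartier structure on $\omega_D$ induced by Frobenius on $D$ must match the one inherited from $\omega_X\otimes\sO_X(D)$ under restriction. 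I would derive it from Grothendieck duality using the identification $(F^e)^!\sO_X(-D)\cong\sO_X((1-p^e)K_X-p^eD)$ of \autoref{rmk:duality}.\autoref{itm:f_upper_shriek_twist_divisor}, together with the functoriality of the trace along the composition of the closed immersion $D\hookrightarrow X$ with $F^e$. Once this commutativity is established, tensoring the diagram with $\sL$, applying $H^0$, and combining with the cohomological surjectivity of the previous paragraph, a short diagram chase shows that for all $e\gg 0$ the restriction $H^0(X,\sL)\to H^0(D,\sL|_D)$ sends the image of the left vertical onto the image of the right vertical. Since these images stabilize to $S^0(X,D;\sL)$ and $S^0(D;\sL|_D)$ respectively, this is precisely the asserted surjection.
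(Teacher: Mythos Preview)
Your proposal is correct and follows essentially the same route as the paper. The one organizational difference worth noting is that the paper obtains the commutative square you flag as ``the main obstacle'' for free: it starts from the obviously commutative diagram
\[
\xymatrix{
0 \ar[r] & \sO_X(-D) \ar[d] \ar[r] & \sO_X \ar[d] \ar[r] & \sO_D \ar[d] \ar[r] & 0 \\
0 \ar[r] & F_*^e \sO_X(-D) \ar[r] & F_*^e \sO_X \ar[r] & F_*^e \sO_D \ar[r] & 0
}
\]
and applies $\sHom_X(\_,\sO_X)\otimes\sO_X(-D)$; commutativity is then automatic, and the only point to check is that the induced map on cokernels is $\Tr_{F^e,\sO_D}$. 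This is the same duality input you plan to use, just packaged so that the compatibility is a consequence rather than a separate verification.
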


\begin{proof}
Consider the following commutative diagram.
\begin{equation*}
\xymatrix{
0 \ar[r] & \sO_X(-D) \ar[d] \ar[r] & \sO_X \ar[d] \ar[r] & \sO_D \ar[d] \ar[r] & 0 \\
0 \ar[r] & F_*^e \sO_X(-D) \ar[r] & F_*^e \sO_X \ar[r] & F_*^e \sO_D \ar[r] & 0 
}
\end{equation*}
Applying $\sHom(\_, \sO_X) \otimes \sO_X(-D)$ to the above diagram (using duality and the projection formula a few times) we obtain
\begin{equation*}
\xymatrix{
\sO_X & \ar[l] \sO_X(-D) & \ar[l] 0  \\
F_*^e \sO_X((1-p^e)(K_X +D)) \ar[u] & \ar[l] \ar[u] F_*^e \sO_X((1-p^e)K_X - p^eD) & \ar[l] 0
}.
\end{equation*}
Completing with the cokernels we obtain the following diagram:
\begin{equation*}
\xymatrix{
0 & \sO_D \ar[l] & \ar[l] \sO_X & \ar[l] \sO_X(-D) & \ar[l] 0  \\
0 & F^e_* \sO_D((1-p^e)K_D) \ar[l] \ar[u] & \ar[u] \ar[l] F_*^e \sO_X((1-p^e)(K_X +D)) & \ar[u] \ar[l] F_*^e \sO_X((1-p^e)K_X - p^eD) & \ar[l] 0
}
\end{equation*}
One can show that the left-most vertical arrow is the trace map (this follows from the setup of the duality theory in \cite{Hartshorne_Residues_and_duality}, or from the fact that the trace map at the Gorenstein points is the local generator of $\sHom_X(F_*^e \sO_X((1-p^e)K_X), \sO_X)$, which can be proven by an easy duality argument). Hence, we see that if we tensor this diagram with $\sL$ and then take cohomology long exact sequence of both rows we obtain for each integer $e>0$ the following diagram where the left column is exact:
\begin{equation*}
\xymatrix{
H^0(X, \sL \otimes F_*^e \sO_X((1-p^e)(K_X +D)) ) \ar[r] \ar[d] & H^0(X, \sL) \ar[d] \\
H^0(D, \sL|_D \otimes F^e_* \sO_D((1-p^e)K_D) ) \ar[r] \ar[d] & H^0(D, \sL|_D)   \\
H^1(X ,  \sL \otimes F_*^e \sO_X((1-p^e)K_X - p^eD) ) & 
} 
\end{equation*}
In particular, we see that to prove the surjectivity claimed in the statement, it is enough to show that for all $e \gg 0$, 
\begin{multline*}
0 = H^1(X , \sL \otimes  F_*^e \sO_X((1-p^e)K_X - p^eD) ) = H^1(X ,  \sO_X(p^e L - (1-p^e)K_X - p^eD) )
\\ = H^1(X ,  \sO_X(p^e (L - K_X -D) +K_X) ).
\end{multline*}
This vanishing is satisfied by Serre vanishing, using the $L-K_X -D$ ample assumption. 
\end{proof}

\begin{remark}
\autoref{thm:lifting} is the simplified version of \cite[Thm 5.3]{Schwede_A_canonical_linear_system}. The latter is presented in the more general framework of pairs, using the adjunction theory mentioned in \autoref{rmk:F_adjunction}.
\end{remark}

\subsection{Bounding $F$-stable sections}
\label{sec:bounding_Frobenius_stable}

To use \autoref{thm:lifting} for finding sections we need another ingredient guaranteeing that $S^0(D; \sL|_D)$ is big enough. 
In dimension $1$, i.e., if $X$ is a smooth, projective curve, Tango showed that $H^0(X,\sL)= S^0(X, \sL)$ for $\deg \sL > \left\lfloor (2g(X)-2) \frac{p+1}{p} \right\rfloor$ \cite[Lemma 10- Lemma 12]{Tango_On_the_behavior_of_extensions_of_vector_bundles_under_the_Frobenius_map}. The following is a sample baby application of this bound. For stronger statements in this direction, we refer to \cite{Ekedahl_Canonical_models_of_surfaces_of_general_type_in_positive_characteristic,Shepherd_Barron_Unstable_vector_bundles_and_linear_systems_on_surfaces_in_characteristic_p}, although these use different techniques.

\begin{corollary}
\label{cor:sample}
Let $X$ be a smooth, projective surface with $K_X$ ample. If $mK_X \sim C$ for some smooth curve $C$ and integer $m \geq 2$, then $|m(m+1)K_X|$ is a free linear system.
\end{corollary}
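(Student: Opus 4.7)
Write $\sL=\sO_X(L)$ with $L:=m(m+1)K_X$, and note the key identity $L\sim (m+1)C$ (since $mK_X\sim C$). The plan is to verify base-point-freeness pointwise, splitting into the cases $x\notin C$ and $x\in C$.

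For $x\notin C$, observe that the $(m+1)$-st power $s_C^{m+1}$ of the defining section of $C$ is a global section of $\sO_X((m+1)C)\cong\sO_X(L)$ which vanishes precisely along $C$, and therefore does not vanish at $x$. This handles all points off $C$ at once, without any Frobenius input.

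The real work is the case $x\in C$, where the plan is to lift a non-vanishing section from $C$ to $X$ using \autoref{thm:lifting}. First, check the ampleness hypothesis with $D:=C$: the divisor $L-K_X-C=(m^2-1)K_X$ is ample since $m\geq 2$ and $K_X$ is ample, so \autoref{thm:lifting} yields a surjection
\[
S^0(X,C;\sL)\twoheadrightarrow S^0(C;\sL|_C).
\]
Next, apply Tango's bound to $C$ to show that $S^0(C;\sL|_C)=H^0(C,\sL|_C)$. By adjunction $K_C=(K_X+C)|_C=(m+1)K_X|_C$, so $2g(C)-2=\deg K_C=m(m+1)K_X^2$, while $\deg\sL|_C=L\cdot C=m^2(m+1)K_X^2$. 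Since $m\geq 2$, one has $\deg\sL|_C=m\cdot(2g(C)-2)>\frac{p+1}{p}(2g(C)-2)$, so the Tango inequality holds and every section of $\sL|_C$ is Frobenius-stable.

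Finally, verify that $|\sL|_C|$ is base-point-free on $C$ via the standard numerical criterion $\deg\geq 2g(C)$: the inequality $m^2(m+1)K_X^2\geq m(m+1)K_X^2+2$ reduces to $m(m+1)(m-1)K_X^2\geq 2$, which holds for $m\geq 2$ and $K_X^2\geq 1$. Thus for any $x\in C$ there is $\bar s\in H^0(C,\sL|_C)=S^0(C;\sL|_C)$ with $\bar s(x)\neq 0$; lifting through the surjection above produces $s\in S^0(X,C;\sL)\subseteq H^0(X,\sL)$ with $s|_C=\bar s$, hence $s(x)\neq 0$. Combined with the off-$C$ case, $|L|$ is base-point-free.

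The conceptually interesting step is the reduction to $C$ via \autoref{thm:lifting} and Tango's bound; the remaining work is purely numerical, and the only place that could cause trouble is a degenerate situation where one of the inequalities becomes tight. Since $m\geq 2$ gives comfortable slack in all three inequalities (ampleness of $(m^2-1)K_X$, Tango's bound, and $\deg\geq 2g$), there is no real obstacle beyond bookkeeping.
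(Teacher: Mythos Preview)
Your proof is correct and follows essentially the same route as the paper's: split into $x\notin C$ (handled by the section cutting out $(m+1)C$) and $x\in C$ (handled by \autoref{thm:lifting} together with Tango's bound on $C$). The only cosmetic difference is that the paper observes directly that $L|_C\sim mK_C$ (via $L=m(K_X+C)$) and then invokes the standard fact that $|mK_C|$ is free for $m\geq 2$, whereas you compute the relevant degrees numerically and apply the criterion $\deg\geq 2g(C)$; both arrive at the same conclusion.
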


\begin{proof}
Let $L:=m(m+1)K_X \sim m (K_X +C)$ and $\sL:= \sO_X(L)$. Then $L|_C \sim m K_C$. In particular, $S^0(C, \sL|_C)= H^0(C, \sL|_C)$ by Tango's bound. Hence, by \autoref{thm:lifting}, $S^0(X, C;\sL) \subseteq H^0(X, \sL)$ surjects onto $H^0(C, \sL|_C)$. In particular, using that $mK_C$ is free for $m \geq 2$, we obtain that $L$ is free at every point of $C$. On the other hand, it is also free at the points of $X \setminus C$, since $(m+1)C \sim L$. 
\end{proof}

Unfortunately, in higher dimensions one cannot hope for a result similar to Tango's, that is, a statement saying that for ample line bundles $\sL$ of big enough volume $S^0(X, \sL) = H^0(X, \sL)$ holds, where the bounds depend only $\vol(X)$. Indeed, one can easily find a counterexample amongst surfaces of the form $C \times D$,  for some curves $C$ and $D$, and line bundles $\sL$ of the form $\sN \boxtimes \sM$. Then, $S^0(X,\sL) = S^0(C, \sM) \otimes S^0(D, \sN)$ (c.f., \cite[Lemma 2.3.1]{Patakfalvi_Semi_positivity_in_positive_characteristics}), so if $S^0(C, \sM) \neq H^0(C, \sM)$ (which is not hard to arrange, e.g., \cite[Example 2-3]{Tango_On_the_behavior_of_extensions_of_vector_bundles_under_the_Frobenius_map}), then $S^0(X,\sL) \neq H^0(X,\sL)$. On the other hand, by increasing the degree of $\sN$, $\vol(\sL)$ goes to infinity. Hence, so far the best known statement is \cite[Cor 2.23]{Patakfalvi_Semi_positivity_in_positive_characteristics}, the simplified version of which is as follows.

\begin{theorem} 
\label{thm:S_0_equals_H_0} \cite[Cor 2.23]{Patakfalvi_Semi_positivity_in_positive_characteristics}
Let $X$ be a smooth, projective variety, $D$ a smooth effective divisor, and $\sL$ an ample line bundle on $X$. Then, for every $n \gg 0$, 
\begin{equation*}
H^0(X, \sL^n) = S^0(X,D;\sL^n).
\end{equation*}

\end{theorem}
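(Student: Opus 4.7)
The plan is to show that for $n \gg 0$ and every integer $e \geq 1$ the $H^0$-map
\[
H^0\bigl(X, F^e_* \sO_X((1-p^e)(K_X+D)) \otimes \sL^n\bigr) \longrightarrow H^0(X, \sL^n)
\]
induced by the iterated trace $\alpha_e$ is already surjective; then, since the images for varying $e$ form a descending sequence stabilizing to $S^0(X, D; \sL^n)$, this forces $S^0(X,D;\sL^n) = H^0(X,\sL^n)$.

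The key move will be to factor $\alpha_e$ through single Frobenius steps and to exhibit one coherent sheaf that controls all the kernels. Let $\gamma : F_* \sO_X((1-p)(K_X+D)) \to \sO_X$ denote the one-step trace, and put $\sB := \ker(\gamma)$. First I would verify that $\gamma$ is surjective as an $\sO_X$-module map: in local coordinates $x_1, \dots, x_d$ with $D = \{x_1 = 0\}$ and $\omega_X$ trivialized, $\gamma$ becomes $F_*(x_1^{p-1} \sO_X) \hookrightarrow F_* \sO_X \xrightarrow{\Tr} \sO_X$, and the classical formula $\Tr\bigl(x_1^{p-1} \cdot x_1^{p k_1} x_2^{p k_2 + p - 1} \cdots x_d^{p k_d + p - 1}\bigr) = x_1^{k_1} \cdots x_d^{k_d}$ shows that every monomial lies in the image. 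Using the projection formula together with iteration of the trace, one factors $\alpha_e = \beta_1 \circ \beta_2 \circ \cdots \circ \beta_e$, where
\[
\beta_j := F^{j-1}_*\bigl(\gamma \otimes \id_{\sO_X((1-p^{j-1})(K_X+D))}\bigr)
\]
is surjective with kernel $\sK_j := F^{j-1}_*\bigl(\sB \otimes \sO_X((1-p^{j-1})(K_X+D))\bigr)$.

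After tensoring $\beta_j$ with $\sL^n$, using that $F^{j-1}$ is affine gives
\[
H^1(X, \sK_j \otimes \sL^n) = H^1\bigl(X,\; \sB \otimes \sO_X(p^{j-1} A_n + K_X + D)\bigr), \qquad A_n := n L - K_X - D.
\]
To make this vanish uniformly in $j$, I would pick an ample divisor $H$ with $H - (K_X+D)$ nef, and an integer $a$ with $aL - H$ nef and $a \geq m_0$, where $m_0 = m_0(L, \sB)$ is the Fujita-vanishing threshold (so that $H^1(\sB \otimes \sO_X(mL + N)) = 0$ for all $m \geq m_0$ and nef $N$). For $n \geq a$ and every $j \geq 1$, the identity
\[
p^{j-1} A_n + (K_X+D) = \bigl(p^{j-1} n - (p^{j-1} - 1) a\bigr) L + (p^{j-1} - 1)(aL - K_X - D)
\]
displays the second summand as nef and forces the $L$-coefficient to be at least $a \geq m_0$, so Fujita vanishing yields $H^1(\sK_j \otimes \sL^n) = 0$ for every $j$. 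Consequently each $H^0(\beta_j \otimes \sL^n)$ is surjective, and composing shows $H^0(\alpha_e \otimes \sL^n)$ is surjective for every $e \geq 1$.

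The hardest point is obtaining this vanishing \emph{uniformly in $e$}: a direct Serre-vanishing argument applied to $\ker(\alpha_e)$ would yield a threshold on $n$ depending on $e$, too weak to pin down the stable image. Factoring through single-step traces writes every $\sK_j$ as the $F^{j-1}$-pushforward of the \emph{one} sheaf $\sB$ twisted by a line bundle whose positivity grows with $j$, which reduces the whole problem to a single Fujita threshold for $\sB$.
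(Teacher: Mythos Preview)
Your proof is correct and follows essentially the same route as the paper's: both factor the iterated trace $\alpha_e$ through single Frobenius steps, define $\sB$ as the kernel of the one-step map $F_*\sO_X((1-p)(K_X+D))\to\sO_X$, reduce to the vanishing of $H^1\bigl(X,\sB\otimes\sO_X((p^{j-1}-1)(nL-K_X-D)+nL)\bigr)$, and then invoke Fujita vanishing to get a threshold $n_0$ independent of $j$. Your write-up is a bit more explicit than the paper's in two places---you verify the surjectivity of the one-step trace by a local computation (the paper just writes the short exact sequence), and you spell out the decomposition $p^{j-1}A_n+(K_X+D)=(\text{large multiple of }L)+(\text{nef})$ needed to feed Fujita vanishing---but these are elaborations of the same argument, not a different strategy.
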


\begin{proof}
By the functoriality of trace maps, we have an  infinite diagram, where the compositions of any successive arrows is $F^{i}_* (\Tr_{F^j})$ for some $i, j$. 
\begin{equation*}
\xymatrix{
  \dots \ar[r] &  F^e_* \omega_X \ar[r] & F^{e-1}_* \omega_X \ar[r] & \dots \ar[r] & F_* \omega_X \ar[r] & \omega_X
}
\end{equation*}
Twisting this diagram with $\omega_X^{-1}$ and precomposing each map with ``multiplication'' by $(p^e-1)D$, we obtain the following other diagram, where all maps  are induced from $\Tr_{F, \sO_X}$ by  precomposing with ``multiplication'' by $(p-1)D$, then tensoring with $\sO_X((1-p^{e-1})(K_X +D))$, and applying $F^{e-1}_*(\_)$.
\begin{multline*}
  \dots \to  F^e_* \sO_X \left(\left(1-p^e\right)(K_X +D)\right) \to F^{e-1}_* \sO_X\left(\left(1-p^{e-1}\right)(K_X +D)\right) \to \dots  \\
\to  F_* \sO_X((1-p)(K_X +D)) \to \sO_X
\end{multline*}
By tensoring the latter diagram with $\sL^n$ and applying $H^0(X, \_)$, we see that it is enough to find an integer $n_0$ such that the maps obtained this way are all surjective for $n \geq n_0$. That is we want to show that for $n \geq n_0$ and $e >0$ the map 
\begin{equation}
\label{eq:want_surjection}
  H^0\left(X, \sL^n \otimes F^e_* \sO_X((1-p^e)(K_X +D)) \right) \to  H^0\left(X, \sL^n \otimes F^{e-1}_* \sO_X((1-p^{e-1})(K_X +D)) \right) 
\end{equation}
is surjective. However, by the trace map remark above,  this map comes from the exact sequence
\begin{equation*}
\xymatrix{
0 \ar[r] & \sB \ar[r] & F_* \sO_X((1-p)(K_X +D)) \ar[r]^-{\Tr_{F,\sO_X}} & \sO_X \ar[r] & 0.
}
\end{equation*}
In particular, for surjectivity of \autoref{eq:want_surjection}, it is enough to show the following (here $L$ is a divisor of $\sL$):
\begin{multline*}
0 = H^1\left(X, \sL^n \otimes F^{e-1}_* \left( \sO_X((1-p^{e-1})(K_X +D)) \otimes \sB \right) \right)  
\\ \cong
\underbrace{H^1\left(X,  F^{e-1}_* \left( \sO_X((1-p^{e-1})(K_X +D) + p^{e-1} n L ) \otimes \sB_X \right) \right) }_{\textrm{projection formula}}
\\ \cong
\underbrace{H^1\left(X,  \sO_X((1-p^{e-1})(K_X +D) + p^{e-1}n L )  \otimes \sB_X  \right) }_{\textrm{$F^{e-1}$ is affine}}
\\ \cong
H^1\left(X,  \sO_X((p^{e-1}-1)(nL - K_X +D) +  nL )  \otimes \sB_X  \right) 
\end{multline*}
By Fujita vanishing \cite{Fujita_Vanishing_theorems_for_semipositive_line_bundles}, there is an $n_0$, such that the above holds for each integer $e>0$ and $n \geq n_0$. 
\end{proof}

The following is a sample application of \autoref{thm:S_0_equals_H_0}. We note that the surface case of the statement can also be shown using vanishing theorems available for surfaces (e.g. \cite{Hara_Classification_of_two_dimensional_F_regular_and_F_pure_singularities} \cite{Patakfalvi_Schwede_Tucker_Notes_for_the_workshop_on_positive_characteristic_algebraic_geometry}). However, there are no such vanishing theorems in higher dimensions, and hence we are not aware of a significantly different method in the general case. We also note that \autoref{cor:A_1} is an instance of the general phenomenon that some log canonical singularities are $F$-pure. This works so much in dimension $2$ that in fact all surface Kawamata log terminal singularities are $F$-pure if $p>5$ \cite{Hara_Classification_of_two_dimensional_F_regular_and_F_pure_singularities}.

\begin{corollary}
\label{cor:A_1}
An $A_1$ rational double point surface singularity is $F$-pure. More generally, if $x \in X$ is s singularity, such that
\begin{enumerate}
\item  $X$ is Gorenstein, canonical
\item  $X$ admits a resolution of singularities  $f : Y \to X$, such that $E = \Exc(f)$, where
\begin{enumerate}
\item \label{itm:A_1:F_split} $E$ is globally $F$-split and (smooth) Fano (say $E \cong \bP^n$), and
\item \label{itm:A_1:max_ideal} $\sO_Y \cdot f^{-1}m_{x,X} = \sI_{E,Y}$ (the pullback of the ideal of $x$ is the ideal of $E$),
\end{enumerate}
\end{enumerate}
then $X$ is $F$-pure at $x$. 
\end{corollary}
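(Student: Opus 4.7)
The plan is to reduce the statement to lifting the given global $F$-splitting of $E$ to a splitting on a (formal) neighborhood of $E$ in $Y$, and then to push the result forward along $f$ to obtain a splitting on a neighborhood of $x$ in $X$.

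\emph{Step 1 (reduction).} Shrink $X$ to an affine open $V$ around $x$ and set $U := f^{-1}(V)$. If $\Psi : F^e_* \sO_U \to \sO_U$ is $\sO_U$-linear with $\Psi(1)=1$, then applying $f_*$ and using $f_* \sO_U = \sO_V$ (normality plus $f$ birational) together with $f_* F^e_* = F^e_* f_*$ (since $F^e$ is affine), we obtain a splitting $F^e_* \sO_V \to \sO_V$ with $(f_*\Psi)(1)=1$, which witnesses $F$-purity of $V$ at $x$. By the affine analogue of \autoref{lem:splitting_lemma}, producing $\Psi$ on $U$ is equivalent, via the Gorenstein identification $\sHom(F^e_* \sO_Y, \sO_Y) \cong F^e_* \sO_Y((1-p^e)K_Y)$, to producing a section $s$ of $F^e_* \sO_Y((1-p^e)K_Y)$ on $U$ mapping to $1$ under the trace $\Tr_{F^e,\sO_Y}$. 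Since $F$-purity is preserved under faithfully flat homomorphisms, it is enough to construct $s$ on the formal completion $\hat U$ of $Y$ along $E$, which by the theorem on formal functions is the same as producing a compatible sequence in the inverse system $\varprojlim_n H^0(E_n, F^e_* \sO_Y((1-p^e)K_Y)|_{E_n})$, where $E_n$ is the $n$-th infinitesimal neighborhood of $E$.

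\emph{Step 2 (starting data on $E$).} The globally $F$-split hypothesis (2a) together with the $E$-version of \autoref{lem:splitting_lemma} supplies a section $\bar s \in H^0(E, F^e_* \sO_E((1-p^e)K_E))$ mapping to $1$ under $\Tr_{F^e,\sO_E}$. By the adjunction formula $K_Y|_E = K_E - E|_E$ and the fact that $E \cong \bP^n$ is a smooth Cartier divisor (hypothesis (2b) ensures $E$ has its reduced, expected scheme structure as the fiber over $x$), one checks that the restriction map $F^e_* \sO_Y((1-p^e)K_Y)|_E \twoheadrightarrow F^e_* \sO_E((1-p^e)K_E)$ is surjective with kernel the $F^e$-pushforward of a sheaf supported on a thickening of $E$. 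Thus $\bar s$ is the first term of the inverse system that we must extend to all $E_n$.

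\emph{Step 3 (vanishing of obstructions via Fano).} The obstruction to extending from $E_n$ to $E_{n+1}$ lies in a group $H^1(E, \sG_n)$, where $\sG_n$ is the kernel sheaf of $\sO_{E_{n+1}} \to \sO_{E_n}$ twisted by the appropriate line bundle coming from $(1-p^e)K_Y|_{E_{n+1}}$. Using the identification $\mathcal{I}_E^n / \mathcal{I}_E^{n+1} = \Sym^n(\sO_E(-E|_E)) = \sO_{\bP^n}(bn)$ (where $\sO_E(-E|_E) = \sO_{\bP^n}(b)$ with $b > 0$ by the fact that $E$ is contracted) and chasing the Frobenius push-forwards, $\sG_n$ is identified with $F^e_*$ of a direct sum of line bundles $\sO_{\bP^n}(d_i)$ on $E$. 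Since $F^e$ is a homeomorphism, $H^1(E, F^e_* \sF) = H^1(E, \sF)$ for any sheaf $\sF$. For $n = \dim E \geq 2$ every $H^1(\bP^n, \sO(d))$ vanishes, killing all obstructions. For $n = 1$ (the genuine $A_1$ case) the relevant twists $d$ end up being $\geq -1$ once one accounts for the positivity of both $b$ and $(p^e-1)(K_Y - K_E)|_E$, so again $H^1(\bP^1, \sO(d)) = 0$. The inverse system being Mittag--Leffler follows from coherence, so a compatible sequence exists and produces the desired $s$ on $\hat U$; pushing forward via $f$ as in Step 1 finishes the proof.

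\emph{Expected main obstacle.} The technical heart is the bookkeeping of Step 3: tracking how the twist by $(1-p^e)K_Y$ interacts with successive restrictions $F^e_* \sO_Y((1-p^e)K_Y)|_{E_{n+1}} \to F^e_* \sO_Y((1-p^e)K_Y)|_{E_n}$ and verifying that the \emph{globally} $F$-split (as opposed to merely $F$-pure) hypothesis, together with the positivity built into the Fano assumption, is precisely what forces the obstruction twists into the cohomologically trivial range.
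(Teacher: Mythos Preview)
Your approach is genuinely different from the paper's. The paper proves this corollary as a direct illustration of the two main results of the section. Working with $\sL=\sO_Y((1-p^e)(K_Y+E))$ so that $\sL|_E=\sO_E((1-p^e)K_E)$ by adjunction, it sets up the commutative square
\[
\xymatrix{
H^0(Y,\sL)\ar[r]^-{\beta}\ar[d]_{\gamma} & H^0(Y,\sO_Y)\ar[d]\\
H^0(E,\sL|_E)\ar[r]^-{\delta} & H^0(E,\sO_E)
}
\]
and argues: $\delta$ is surjective by the global $F$-splitting of $E$; $\gamma$ is surjective because \autoref{thm:S_0_equals_H_0} gives $H^0(E,\sL|_E)=S^0(E;\sL|_E)$ for $e\gg 0$ (here $-K_E$ is ample) and \autoref{thm:lifting} then lifts $S^0(E;\sL|_E)$ to $S^0(Y,E;\sL)\subseteq H^0(Y,\sL)$. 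Hypothesis \eqref{itm:A_1:max_ideal} with Nakayama turns surjectivity of $\delta\circ\gamma$ into surjectivity of $\beta$, and the Gorenstein canonical assumption embeds $H^0(Y,\sL)$ into $H^0(X,\sO_X((1-p^e)K_X))$, giving $F$-purity. No formal geometry or obstruction theory is used.

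Your formal-extension route is plausible, but there is a genuine gap in Step~3. You only analyze the obstruction to \emph{extending} a section of $F^e_*\sO_Y((1-p^e)K_Y)$ from $E_n$ to $E_{n+1}$; you do not address extending it \emph{compatibly with the trace}. Concretely, once you lift $s_n$ to some $\tilde s_{n+1}$, the defect $\Tr(\tilde s_{n+1})-1$ lies in $H^0(E,\sO_E(-nE|_E))$, and correcting it requires the trace-induced map
\[
H^0\bigl(E,\ F^e_*\sO_Y((1-p^e)K_Y)|_E\otimes\sO_E(-nE|_E)\bigr)\ \longrightarrow\ H^0\bigl(E,\sO_E(-nE|_E)\bigr)
\]
to be surjective, hence $H^1$ of the \emph{kernel} of the restricted trace, twisted by $\sO_E(-nE|_E)$, to vanish. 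That kernel is a genuine vector bundle on $E$, not a priori a direct sum of line bundles; for $\dim E\ge 2$ your blanket ``$H^1(\bP^n,\sO(d))=0$'' does not apply to it, and for $\dim E=1$ the degree bookkeeping you sketch must be done for this kernel as well. This can likely be fixed by filtering through the thickening $p^eE$, but it is real work that you have not done. The paper's argument avoids all of this by packaging the lift inside \autoref{thm:lifting} (where a single Serre-vanishing step replaces your infinite tower of obstructions) and \autoref{thm:S_0_equals_H_0}.
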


\begin{proof}
We may assume that $X$ is affine and local. By the proof of \autoref{prop:sigma}, we see that $X$ is $F$-pure at $x$  if and only if the trace map $F^e_* \omega_X \to \omega_X$ is surjective for every integer $e \gg 0$. Then, by twisting with $\omega_X^{-1}$, this is equivalent to asking that $F^e_* \sO_X((1-p^e)K_X) \to \sO_X$ is surjective.  

Consider now the following commutative diagram (c.f., the proof of \autoref{thm:lifting}):
\begin{equation*}
\xymatrix{
H^0(E, \sO_E((1-p^e)K_E)) \ar[r]^{\delta} & H^0(E, \sO_E) \\
H^0(Y,\sO_Y((1-p^e)(K_Y+E)))  \ar@{^(->}[d] \ar[r]^-{\beta} \ar[u]^{\gamma} & \ar[u] H^0(Y, \sO_Y)   \ar@{=}[d] \\
H^0(X,\sO_Y((1-p^e)K_X)) \ar[r]^-{\alpha} & H^0(X, \sO_X)
}.
\end{equation*}
Since $X$ is affine, it is enough to show  the surjectivity of $\alpha$ for every $e \gg 0$. This follows from the surjectivity of $\beta$ for $e \gg 0$.   By assumption \autoref{itm:A_1:max_ideal}, and Nakayama's lemma, for that it is enough to show that both $\gamma$ and $\delta$ are surjective. 

The surjectivity of $\delta$ (for every integer $e >0$) follows from assumption \autoref{itm:A_1:F_split}. Indeed, if $E$ is globally $F$-split, then $\sO_E((1-p^e)K_E) \to \sO_E$ is surjective, as it is the dual of the split injection $\sO_E \to F^e_* \sO_E$.

Hence, we are left to show the surjectivity of $\gamma$ for $e \gg 0$.  By assumption \autoref{itm:A_1:F_split},  $E$ is Fano and hence $K_E$ is anti-ample. In particular, by \autoref{thm:S_0_equals_H_0}, $H^0(E, \sO_E((1-p^e)K_E)) = S^0(E ; \sO_E((1-p^e)K_E))$ for every integer $e \gg 0$. Hence, by \autoref{thm:lifting}, $S^0(Y,E; \sO_Y((1-p^e)(K_Y+E)) ) \subseteq H^0(Y,  \sO_Y((1-p^e)(K_Y+E)))$ surjects onto $H^0(E, \sO_E((1-p^e)K_E))$, which concludes our proof. 
\end{proof}

\section{Applications to higher dimensional algebraic geometry}
\label{sec:application}

The methods for finding sections presented in \autoref{sec:newer_methods} led recently to many advances in positive characteristic algebraic geometry. Some of them used the global generation result on Cartier modules presented in \autoref{sec:positivity_Cartier_modules} or the idea of its proof, and others used the lifting method presented in \autoref{sec:Frobenius_stable} and \autoref{sec:bounding_Frobenius_stable}. Even others were not using directly the tools of \autoref{sec:newer_methods}, but were building on other results proven by those methods. Below we summarize these results. Here we only list them, possibly indicating the method of their proof, and we refer to the original articles for the proofs or the more detailed statements. 

\subsection{Minimal Model Program}
\label{sec:MMP}

Probably the most prominent application is the, by now fairly complete, Minimal Model Program for $3$-folds. The main missing piece is abundance for Kodaira dimension $0$, although also some of the standard applications are also missing such as  most of the usual boundedness results. 

The story started before the dawn of the methods in \autoref{sec:newer_methods}, when Keel proved a base-point freeness theorem in the general type case \cite{Keel_Basepoint_freeness_for_nef_and_big_line_bundle_in_positive_characteristics} with unrelated methods. In his paper he also presented a Cone theorem for effective pairs, which was a folklore statement at the time. 

After Keel's results the main question was if flips exist for $3$-folds. This was shown in \cite{Hacon_Xu_On_the_three_dimensional_minimal_model_program_in_positive_characteristic} in the case of standard coefficients $\{n-1/n | n \in \bZ^{>0} \}$ for $p>5$, using mostly the lifting techniques of \autoref{sec:Frobenius_stable} and \autoref{sec:bounding_Frobenius_stable}, as well as the results about Seshadri constant discussed in \autoref{sec:Seshadri_constant}. These technical tools were used in the framework originally developed by Shokurov of proving existence of flips (c.f., \cite{Shokurov_Prelimiting_flips,Fujino_Special_termination_and_reduction_to_pl_flips}). An important ingredient is a generalization of Hara's result that Kawamata log terminal surface singularities are strongly $F$-regular if $p>5$ \cite{Hara_Classification_of_two_dimensional_F_regular_and_F_pure_singularities}, c.f., \autoref{cor:A_1}. In particular, this is the main reason for the $p>5$ restriction.

After the proof of existence of flips, most of the other papers were not using Frobenius techniques directly, but rater the techniques of the Minimal Model Program, building on the results of 
\cite{Hacon_Xu_On_the_three_dimensional_minimal_model_program_in_positive_characteristic}. An exception was \cite{Cascini_Tanaka_Xu_On_base_point_freeness_in_positive_characteristic}, where the Cascini, Tanaka and Xu proved global generation results using some very intricate version of the Frobenius techniques.  

Let us conclude \autoref{sec:MMP} by listing the above mentioned  papers that build on the results of \cite{Hacon_Xu_On_the_three_dimensional_minimal_model_program_in_positive_characteristic} rather than using the Frobenius techniques directly. In particular, these papers necessarily pertain to $3$-folds and $p>5$. First, Birkar showed using a clever MMP trick the existence of flips for arbitrary coefficients
\cite{Birkar_Existence_of_flips_and_minimal_models_for_3_folds_in_char_p}. In the same article he showed many other MMP related theorems, e.g., ACC for log canonical threshold, base point freeness, etc. Xu also gave an independent proof of the latter \cite{Xu_On_base_point_free_theorem_of_threefolds_in_positive_characteristic}. The already mentioned articles gave a full treatment of the general type case. The rest was wrapped up in the papers of Birkar and Waldron \cite{Birkar_Waldron_Existence_of_Mori_fibre_spaces_for_3_folds_in_char_p,Waldron_Finite_generation_of_the_log_canonical_ring_for_3-folds_in_char_p,Waldron_The_LMMP_for_log_canonical_3-folds_in_char_p}.

\subsection{Seshadri constants}
\label{sec:Seshadri_constant}

Musta{\c{t}}{\u{a}} and Schwede introduced a Frobenius versions $\epsilon_F(\sL,x)$ of Seshadri constants in \cite{Mustata_Schwede_A_Frobenius_variant_of_Seshadri_constants}, where $\sL$ is a line bundle, and $x \in X$ is smooth point. They could prove that if $\epsilon_F(\sL,x)>1$, then $\omega_X \otimes \sL$ is globally generated at $x$, and if $\epsilon_F(\sL,x)>2$, then $\sL$ defines a birational map. Furthermore, since $\epsilon(\sL,x)/\dim X  \leq \epsilon_F(\sL,x) \leq \epsilon(\sL,x)$, where $\epsilon(\sL,x)$ is the usual Seshadri constant, this implies the positive characteristic versions of the usual global generation results in characteristic zero given by Seshadri constants \cite[Prop 5.1.19]{Lazarsfeld_Positivity_in_algebraic_geometry_I}. The method of the proof is related to the technical tools presented in \autoref{sec:newer_methods}, although different in terms of details.

We note that similar global generation statements were shown for surfaces in  \cite{Cerbo_Fanelli_Effective_Matsusaka_s_Theorem_for_surfaces_in_characteristic_p} by methods not discussed in the present article. There is no assumption here of Seshadri type, so the result says that if $L$ is an ample Cartier divisor on a smooth surface $X$, then $2K_X + 38 L$ is very ample. These are the smallest  constants known so far for surfaces for Fujita type very ampleness statements.

\subsection{Semi- and weak-positivity}

In characteristic zero, statements proving semi-positivity (i.e., nefness) or weak-positivity (roughly the version of pseudo-effectivity for vector bundles) of $f_* \omega_{X/T}$ for fiber spaces $f : X \to T$ have been abundant and central \cite{Griffiths_Periods_of_integrals,Fujita_On_Kahler_fiber_spaces,Kawamata_Characterization_of_abelian_varieties,Viehweg_Weak_positivity,Kollar_Subadditivity_of_the_Kodaira_dimension}. In positive characteristic, similar results were obtained only recently by the author of the article and Ejiri \cite{Patakfalvi_Semi_positivity_in_positive_characteristics,Patakfalvi_On_subadditivity_of_Kodaira_dimension_in_positive_characteristic_over_a_general_type_base,Ejiri_Weak_positivity_theorem_and_Frobenius_stable_canonical_rings_of_geometric_generic_fibers,Ejiri_Positivity_of_anti-canonical_divisors_and_F-purity_of_fibers} via (all) the methods of \autoref{sec:newer_methods}, plus a careful study of the behavior of all this in families in 
\cite{Patakfalvi_Schwede_Zhang_F_singularities_in_families}. Similar methods yield subadditivity of Kodaira dimension type statements, that is, that $\kappa(X) \geq \kappa(T) + \kappa(F)$ for a fiber space $f : X \to T$ with geometric general fiber $F$ \cite{Patakfalvi_On_subadditivity_of_Kodaira_dimension_in_positive_characteristic_over_a_general_type_base,Ejiri_Iitaka_s_C_n_m_conjecture_for_3-folds_in_positive_characteristic, Zhang_Subadditivity_of_Kodaira_dimensions_for_fibrations_of_three-folds_in___positive_characteristics}. More precisely, the author of the present article  showed  in \cite{Patakfalvi_On_subadditivity_of_Kodaira_dimension_in_positive_characteristic_over_a_general_type_base} subadditivity of Kodaira dimension if the base is of general type and the Hasse-Witt matrix (c.f., \autoref{cor:pushforward}) of the geometric generic fiber is not nilpotent.  In the other direction, when the dimension is fixed, but the other assumptions are relaxed, Ejiri  showed full subadditivity of Kodaira 
dimension for $3$-folds in \cite{Ejiri_Iitaka_s_C_n_m_conjecture_for_3-folds_in_positive_characteristic}. This result was earlier obtain over finite fields, and their algebraic closure by Birkar, Chen and Zheng \cite{Birkar_Chen_Zhang_Iitaka_s_C_n_m_conjecture_for_3-folds_over_finite_fields}. However, instead of using Frobenius techniques directly, they used \cite{Patakfalvi_Semi_positivity_in_positive_characteristics} together with the MMP results explained in \autoref{sec:MMP}. Furthermore, 
\cite{Chen_Zhang_The_subadditivity_of_the_Kodaira-dimension_for_fibrations_of_relative_dimension_one_in_positive_characteristics} shows the relative $1$ dimensional case of subadditivity of Kodaira dimension by completely unrelated techniques originating from \cite{Viehweg_Canonical_divisors_and_the_additivity_of_the_Kodaira_dimension_for_morphisms_of_relative_dimension_one}. 

\subsection{Abelian varieties, generic vanishing and varieties of maximal Albanese dimension}

There has been much known classically about abelian varieties themselves in positive characteristic (e.g., \cite{Mumford_Abelian_varieties}), and not much has been added to that using Frobenius techniques. On the other hand, there have been many interesting new results concerning divisors or cohomology on abelian varieties, or varieties mapping to abelian varieties. First,  Hacon showed in \cite{Hacon_Singularities_of_pluri_theta_divisors_in_Char_p} a multiplicity bound on the linear systems of Theta-divisors using techniques as in \autoref{sec:newer_methods}. Next, Zhang obtained with similar methods that $|4 K_X|$ is birational for varieties of maximal Albanese dimension with separable Albanese morphism 
\cite{Zhang_Pluri-canonical_maps_of_varieties_of_maximal_Albanese_dimension_in___positive_characteristic}. 

Later, Hacon and the author of the present article showed a   generic vanishing type theorem for Cartier modules \cite{Hacon_Patakfalvi_A_generic_vanishing_in_positive_characteristic}, which (weakly) mirrors the generic vanishing theorems available for holonomic $\sD$-modules over $\bC$ \cite{Schnell_Holonomic_D-modules_on_abelian_varieties}. This then applied to the Cartier module $a_* \omega_X$ (\autoref{ex:Cartier_module_dualizing}), where $a: X \to A$ is the Albanese morphism of a projective variety, yields generic vanishing results similar to the classical ones in characteristic zero. An example statement is that for a Cartier module $\sM$ on an abelian variety $A$ and for any integer $i>0$, the natural homomorphism $H^i(A, \sL \otimes F_*^e \sM) \to H^i(A, \sL \otimes \sM)$ is zero for every very generic $\sL \in \Pic^0(A)$ and integer $e \gg 0$ (where the bound on $e$ depends on $\sL$) \cite[Cor 3.3.1]{Hacon_Patakfalvi_A_generic_vanishing_in_positive_characteristic}. Later some of the statements have 
been improved in \cite{Watson_Zhang_On_the_generic_vanishing_theorem_of_Cartier_modules} by Watson and Zhang.

One consequence of the above generic vanishing theorem is a characterization of ordinary abelian varieties. An abelian variety $A$ is ordinary if the action of the Frobenius on $H^{\dim A}(A,\sO_X)$ is bijective, or equivalently $S^0(A, \sO_A)\neq 0$, using \autoref{def:S_0}. Then in \cite[Thm 1.1.1.a]{Hacon_Patakfalvi_A_generic_vanishing_in_positive_characteristic} it is shown that a smooth projective variety $X$ is birational to an ordinary abelian variety if and only if its first Betti number is $2 \dim X$,  $\kappa_S(X)=0$ (where $\kappa_S$ is the version of Kodaira dimension defined using $S^0(\_)$ instead of $H^0(\_)$ 
\cite[4.1]{Hacon_Patakfalvi_A_generic_vanishing_in_positive_characteristic}), and the degree of the Albanese map of $X$ is prime-to-$p$. Similar methods yielded also a characterization of abelian varieties in \cite{Hacon_Patakfalvi_On_charactarization_of_abelian_varieties_in_characteristic_p} (so no ordinarity): $X$ is birational to an abelian variety if and only if $\kappa(X)=0$ and the Albanese map is generically finite of degree prime-to-$p$ over its image. Furthermore, using the results of \cite{Hacon_Patakfalvi_A_generic_vanishing_in_positive_characteristic}, Sannai and Tanaka proved another characterization of ordinary ablian varieties \cite{Sannai_Tanaka_A_characterization_of_ordinary_abelian_varieties_by_the_Frobenius___push-forward_of_the_structure_sheaf}: $X$ is an ordinary abelian variety if and only if $K_X$ is pseudo-effective and $F^e_* \sO_X$ is a direct sum of invertible sheaves for infinitely many integers $e>0$.  

In a different direction, Wang showed generic vanishing type statement for surfaces lifting to $W_2(k)$ \cite{Wang_Generic_vanishing_and_classification_of_irregular_surfaces_in_positive___characteristics},  which he used for classification statements for surfaces. 

\subsection{Numerical dimensions}

In \cite{Mustata_The_non_nef_locus_in_positive_characteristic}, Musta{\c{t}}{\u{a}} showed the characterization of the non-nef locus, also called diminished base-locus,  using the asymptotic order of vanishing, which has been known for a while in characteristic zero \cite{Ein_Lazarsfeld_Musta_Nakamaye_Popa_Asymptotic_invariants_of_base_loci}. The method is a generalization of that of \autoref{thm:Cartier_module_globally_generated}. Related methods are used in \cite{Cascini_Hacon_Mustata_Schwede_On_the_numerical_dimension_of_pseudo_effective_divisors_in_positive_characteristic} to show that if a pseudo-effective divisor had numerical dimension $0$, then it is numerically equivalent to the negative part of its divisorial decomposition (see \cite[Thm V.1.12]{Nakayama_Noboru_Zariski_decomposition_and_abundance} for the characteristic zero version). 

We also mention that by methods not discussed in this article (see \cite{Keel_Basepoint_freeness_for_nef_and_big_line_bundle_in_positive_characteristics}), Cascini, {M\raise .575ex \hbox{\text{c}}Kernan} and Musta{\c{t}}{\u{a}} showed that the augmented base-locus of a nef line bundle is equal to its exceptional set, that is the union of those irreducible subvarieties over which it has zero self-intersection
\cite{Cascini_McKernan_Mustata_The_augmented_base_locus_in_positive_characteristic} (see \cite{Nakamaye_Stable_base_loci_of_linear_series} for the characteristic zero version).

\subsection{Rationally connectedness and Fano varieties}

In the past few years there have been quite a few results about rationally chain connectedness of $3$-folds. First, in  \cite{Gongyo_Li_Patakfalvi_Schwede_Tanaka_Zong_On_rational_connectedness_of_globally_F_regular_threefolds} it was shown that globally $F$-regular  $3$-folds are rationally chain connected if $p \geq 11$. The method uses the MMP for $3$-folds (\autoref{sec:MMP}) together with some facts about globally $F$-regular varieties.  Later,  this result was generalized to Fano type $3$-folds for $p >5$ in \cite{Gongyo_Nakamura_Tanaka_Rational_points_on_log_Fano_threefolds_over_a_finite_field} by Gongyo, Nakamura and Tanaka. The latter article used MMP, particularly MMP for Fano type varieties, which was not available to the fullest generality at the time of 
\cite{Gongyo_Li_Patakfalvi_Schwede_Tanaka_Zong_On_rational_connectedness_of_globally_F_regular_threefolds}. Furthermore, Wang also showed a relative version of these results in 
\cite{Wang_On_relative_rational_chain_connectedness_of_threefolds_with_anti-big___canonical_divisors_in_positive_characteristics}.

We note that using the above rationally chain connected property, it was also shown in  \cite{Gongyo_Nakamura_Tanaka_Rational_points_on_log_Fano_threefolds_over_a_finite_field} that Fano type varieties over $\bF_q$ have a rational point. In fact, it was shown that they have $W \sO$-rational singularities, from which the above (and also a more precise) rational point statement follows using \cite{Blickle_Esnault_Rational_singularities_and_rational_points}. This generalizes to the singular setting the $3$-fold case of  Esnault's result on rational points of Fano manifolds \cite{Esnault_Varieties_over_a_finite_field_with_trivial_Chow_group_of_0_cycles_have_a_rational_point}.

Also, about anti-canonically polarized varieties, Ejiri proved in \cite{Ejiri_Positivity_of_anti-canonical_divisors_and_F-purity_of_fibers} that if $f: X \to Y$ is a flat fiber space (including smoothness of $X$ and $Y$) with strongly $F$-regular fibers such that $-K_X$ is nef and big, then so is $-K_Y$. This is  a generalization of the consequence of the Hurwitz formula stating that the finite image of a rational curve is a rational curve. Different versions of the statement have been known for a while in characteristic $0$ (see \cite{Ejiri_Positivity_of_anti-canonical_divisors_and_F-purity_of_fibers} for references).

\subsection{Canonical bundle formula}

Canonical bundle formulas generalize the classical formula of Kodaira for elliptic fibrations, relating the (log-)canonical divisor of the base of a fibration the fibers of which have trivial (log-)canonical divisors to the (log-)canonical divisor of its total space. First, Das and Schwede showed such a formula for pairs with indices prime-to-$p$ and globally $F$-split generic fibers \cite{Das_Schwede_The_F-different_and_a_canonical_bundle_formula}. This had consequences on adjunction theory of high codimension log canonical centers. It was shown in the same article that for such a center the usual different is always smaller than the $F$-different. In fact, in codimension $1$ the two differents agree, a statement showed earlier also by Das \cite[Thm 5.3]{Das_On_strongly_F_regular_inversion_of_adjunction}.
In the same direction, also using a canonical bundle formula, not requiring $F$-singularity assumption but only working for families of curves, Hacon and Das drew consequences on adjunction theory of one dimensional log canonical centers on $3$-folds \cite{Das_Hacon_On_the_Adjunction_Formula_for_3-folds_in_characteristic_p>5}.

\subsection{Kodaira type vanishings, surjectivity statements and liftability to characteristic $0$}

Using methods related to \autoref{thm:S_0_equals_H_0}, Tanaka showed that on a smooth projective surface, although Kodaira vanishing fails, $H^1(K_X + A + mN)=0$, where $A$ is an ample divisor, $N$ is a nef, numerically non-trivial divisor and $m \gg0$ is a big enough integer \cite{Tanaka_The_X-method_for_klt_surfaces_in_positive_characteristic}. In the same article, he drew consequences for the log-MMP on surfaces. 

A usual use of Kodaira vanishing in characteristic $0$ birational geometry is to show lifting statements for pluricanonical forms as follows: Assume that $K_X +S +A $ is nef, where for simplicity assume that $X$ is smooth and $A$ and $S$ are divisors on $X$. Assume also that $A$ is ample, $S$  is a smooth and effective and $K_X +S +A$ is nef. Then Kodaira vanishing implies that $H^1(X, \sO_X(K_X + A + (m-1) (K_X + S +A )))=0$ for any integer $m>0$. In particular, this implies that $H^0(X, \sO_X(m(K_X + S +A ))) \to H^0(S, \sO_S(m(K_S + A|_S)))$ is surjective. Tanaka showed in 
\cite{Tanaka_The_trace_map_of_Frobenius_and_extending_sections_for_threefolds} that the same surjectivity holds for positive characteristic $3$-folds assuming that $\kappa(K_S + A|_S) \neq 0$ and $m \gg 0$. 

It has been known for a while that despite the failure of Kodaira vanishing for surfaces in general \cite{Raynaud_Contre_exemple_au_vanishing_theorem}, it does hold for special surfaces that are not elliptic of Kodaira dimension $1$ \cite{Ekedahl_Canonical_models_of_surfaces_of_general_type_in_positive_characteristic,Shepherd_Barron_Unstable_vector_bundles_and_linear_systems_on_surfaces_in_characteristic_p}. However, there has been not much known about the Kawamata-Viehweg vanishing theorem, as its usual proof involves passage to certain branch covers that are typically of general type. Cascini, Tanaka and Witaszek showed recently that Kawamta-Viehweg vanishing does hold for surfaces of Fano type for $p \geq p_0$ \cite{Cascini_Tanaka_Witaszek_On_log_del_Pezzo_surfaces_in_large_characteristic}, where $p_0$ is some integer. Unfortunately, only the existence of such a $p_0$ is known so far, although knowing it explicitly would be very useful. The above Kawamata-Viehweg vanishing is deduced from the other result 
of \cite{Cascini_Tanaka_Witaszek_On_log_del_Pezzo_surfaces_in_large_characteristic}, which states that over a certain prime a Fano-type  variety $X$ together with the boundary $B$ guaranteeing the Fano type condition is either  globally $F$-regular or it has a log-resolution lifting to characteristic zero (here the bound on the prime depends on the coefficients of $B$).  In a related direction Zdanowicz showed different connections between $F$-singularities and liftability to $W_2(k)$ together with the Frobenius
\cite{Zdanowicz_Liftability_of_singularities_and_their_Frobenius_morphism_modulo_p_2}.

\bibliographystyle{skalpha}
\bibliography{includeNice}

\end{document}